\documentclass[a4paper,11pt]{article}
\usepackage{pdfpages}
\pdfoutput=1
\usepackage{amsmath,amsfonts,amssymb,amsthm,graphicx,url}
\usepackage{enumerate}
\usepackage{caption}
\usepackage{fancyvrb}
\usepackage{booktabs}
\usepackage{fullpage}
\usepackage{float}
\usepackage{color}
\usepackage{pinlabel}
\newtheorem{prop}{Proposition}
\newtheorem{lemma}[prop]{Lemma}
\DeclareMathOperator{\sign}{sign}
\DeclareMathOperator{\Col}{Col}

\newenvironment{hproof}{%
  \proof}{\endproof}
\title{Unlinking Numbers of Links with Crossing Number 10}
\author{Lavinia Bulai}
\date{}
\setlength{\parindent}{0em}
\setlength{\parskip}{0.6em}
\begin{document}
\maketitle
\begin{abstract}
In this paper we investigate the unlinking numbers of 10-crossing links. We make use of various link invariants and explore their behaviour when crossings are changed. The methods we describe have been used previously to compute unlinking numbers of links with crossing number at most 9. Ultimately, we find the unlinking numbers of all but 2 of the 287 prime, non-split links with crossing number 10.
\end{abstract}
\section{Introduction}
A \textit{knot} can be thought of as a knotted piece of string with cross-section a single point and ends glued together to form a closed curve. A \textit{link} is a collection of knots, each knot representing a \textit{component} of the link. A \textit{sublink} of a link is the disjoint union of some of its components. Formally, a knot is a smooth isotopy class of embeddings of $S^1$ in $\mathbb{R}^3$ or $S^3$. Similarly, a \textit{link} is a smooth isotopy class of embeddings of a disjoint union of one or more circles in $\mathbb{R}^3$ or $S^3$. A \textit{smooth isotopy} is a smooth map $F:S^1\sqcup\dots\sqcup S^1\times[0,1]\rightarrow\mathbb{R}^3$ together with a family of embeddings $f_t:S^1\sqcup\dots\sqcup S^1\rightarrow\mathbb{R}^3$, such that $f_t(x)=F(x,t)$ for all $x\in S^1\sqcup\dots\sqcup S^1$ and $t\in[0,1]$. A link is \textit{trivial} if it is isotopic to the disjoint union of finitely many circles in a plane.

\begin{figure}[b]
\centering
\includegraphics[scale=0.6]{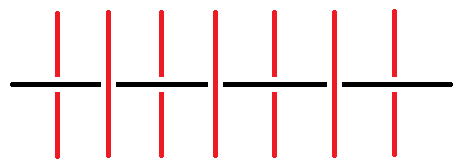}
\caption{The black strand goes alternately over and under the red one}
\label{fig:alter}
\end{figure}
A link is \textit{oriented} if each of its components is assigned an orientation. There are $2^n$ ways to orient a link with $n$ components, by adding an arrow on each knot, pointing in one of two possible directions. A projection of a link onto a plane together with a set of instructions on under-crossings and over-crossings that suffice to reconstruct the original link is referred to as a \textit{link diagram}. We assume the projection is injective, except for some double points. If the crossings are such that one goes under and over alternately when traveling along each component from an arbitrary point back to itself, then the link diagram is said to be \textit{alternating}. This property is illustrated in Figure~\ref{fig:alter}. A link is \textit{alternating} if it admits an alternating diagram. A \textit{split link} is a link that has a projection as a disconnected diagram. Otherwise, if every diagram of the link is connected, the link is said to be \textit{non-split}. If every diagram of a link is such that any line intersecting the diagram in two points divides the link into two subsets, one of them isotopic to an embedded line segment via an isotopy fixing the two endpoints, then the link is said to be \textit{prime}.

The \textit{crossing number} of a link is the minimal number of crossings in any of its diagrams. The operation of swapping the two strands that form a crossing, such that the under-crossing becomes the over-crossing and vice-versa, is known as \textit{changing a crossing}. With a sensible choice of crossing changes, one can obtain the trivial link from any given diagram. The \textit{unlinking number} is the minimal number of crossings one has to change in order to obtain the trivial link, where the minimum is taken over all diagrams of the link. In general, unlinking numbers are difficult to determine. In this paper we investigate the unlinking number of each of the $287$ prime, non-split links with crossing number $10$ and at least $2$ components, by finding constraints on the values it can take. Methods developed by Borodzik-Friedl-Powell \cite{boro}, Kauffman-Taylor \cite{kauffman}, Kawauchi \cite{kawa}, Kohn \cite{kohn}, Murasugi \cite{mura} and Nagel-Owens \cite{owens} give us lower bounds, whereas upper bounds follow from experiment. Of the links we looked at, the unlinking numbers of $2$ are still unknown and require new techniques to be developed. Good references for basics of knot theory are Adams \cite{adams}, Cromwell \cite{cromwell}, Lickorish \cite{ray} and Livingston \cite{charles}.

In Section \ref{bound} we describe various techniques that can be used to produce lower bounds on unlinking numbers. In Section \ref{table} we give a table of the $10$-crossing links and their unlinking numbers, with the exception of two links. For each of these links, we indicate in the table the technique with which the claimed lower bound is produced.

\textbf{Acknowledgements:} I am grateful to Dr Brendan Owens for supporting and encouraging me to write this paper; to the London Mathematical Society for funding my research; to Matthias Nagel and Mark Powell for their useful comments and feedback.   
\section{Lower bounds on unlinking numbers}\label{bound}
All the methods we will use throughout this paper to compute unlinking numbers of links with crossing number $10$ have previously been used to find unlinking numbers of links with crossing number $9$ or less. 

We begin with a lemma about real symmetric matrices. The \textit{signature} $\sign A$ of a real symmetric matrix $A$ is the number of positive eigenvalues minus the number of negative eigenvalues, counted with multiplicities. The \textit{nullity} of a matrix is the dimension of its kernel.
\begin{lemma}\label{prima}
Let $A$ be an $n\times n$ real symmetric matrix. Suppose that the matrix $B$ is identical to $A$, apart from one diagonal entry, say $b_{ii}\neq a_{ii}$ where $b_{ii}\in\mathbb{R}$, for some $i\in\left\{1,\dots,n\right\}$. It follows that: 
\begin{enumerate}[i)]
\item the nullity of $B$ differs from the nullity of $A$ by at most 1.
\item if $A$ and $B$ have the same nullity and $b_{ii}>a_{ii}$, then the signature of $B$ and the signature of $A$ are related by either $\sign B=\sign A$ or $\sign B=\sign A+2$.
\item if $A$ and $B$ have different nullities and $b_{ii}>a_{ii}$, then $\sign B=\sign A+1$. 
\end{enumerate}
\end{lemma}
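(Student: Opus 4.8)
The plan is to view $B$ as a rank-one perturbation of $A$. Put $\tau=b_{ii}-a_{ii}\neq 0$ and let $e_i$ denote the $i$-th standard basis vector, so that $B=A+\tau\,e_ie_i^{\top}$; the perturbation $\tau\,e_ie_i^{\top}$ is symmetric of rank $1$, and it is positive semidefinite precisely when $\tau>0$, which is the hypothesis in parts (ii) and (iii). Part (i) needs no information about the sign of $\tau$: since $B-A$ has rank $1$ we have $\operatorname{rank}B\le\operatorname{rank}A+1$, and symmetrically $\operatorname{rank}A=\operatorname{rank}\bigl(B-(B-A)\bigr)\le\operatorname{rank}B+1$; as nullity equals $n$ minus rank, the nullities of $A$ and $B$ differ by at most $1$.

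For parts (ii) and (iii) the main tool is the eigenvalue interlacing for a rank-one positive semidefinite perturbation. Order the eigenvalues of $A$ and $B$ as $\lambda_1\le\cdots\le\lambda_n$ and $\mu_1\le\cdots\le\mu_n$. When $\tau>0$ I would prove that
\[
\lambda_k\le\mu_k\le\lambda_{k+1}\quad\text{for }1\le k\le n-1,\qquad\text{and}\qquad\lambda_n\le\mu_n.
\]
The lower bounds $\mu_k\ge\lambda_k$ are immediate from the Courant--Fischer min--max characterisation of eigenvalues together with $x^{\top}Bx=x^{\top}Ax+\tau(e_i^{\top}x)^2\ge x^{\top}Ax$. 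For the upper bound $\mu_k\le\lambda_{k+1}$, I would intersect the $(k+1)$-dimensional span of the eigenvectors of $A$ for $\lambda_1,\dots,\lambda_{k+1}$ (on which $x^{\top}Ax\le\lambda_{k+1}\,x^{\top}x$) with the hyperplane $e_i^{\perp}$; on the resulting subspace, of dimension at least $k$, the quadratic forms of $A$ and $B$ agree and are bounded above by $\lambda_{k+1}$, and feeding this subspace into the min--max formula for $\mu_k$ gives the claim. This interlacing is classical and could instead simply be quoted.

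It then remains to count signs. Write $n_+,n_-,n_0$ for the numbers of positive, negative and zero eigenvalues of a symmetric matrix, so that $\sign=n_+-n_-$, the nullity is $n_0$, and $n_++n_-+n_0=n$. From $\mu_k\ge\lambda_k$ we get $n_+(B)\ge n_+(A)$ and $n_-(B)\le n_-(A)$, and from $\mu_k\le\lambda_{k+1}$ we get $n_+(B)\le n_+(A)+1$ and $n_-(B)\ge n_-(A)-1$. Hence $n_+(B)-n_+(A)\in\{0,1\}$ and $n_-(B)-n_-(A)\in\{-1,0\}$, while the three differences $n_+(B)-n_+(A)$, $n_-(B)-n_-(A)$ and $n_0(B)-n_0(A)$ sum to $0$. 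If $A$ and $B$ have the same nullity, the first two differences cancel, so they either both vanish, giving $\sign B=\sign A$, or equal $1$ and $-1$ respectively, giving $\sign B=\sign A+2$; this proves (ii). If the nullities differ then, by part (i), $n_0(B)-n_0(A)=\pm 1$, and this pins the other two differences down to $0$ and $-1$ (when the nullity goes up) or to $1$ and $0$ (when it goes down), so that $\sign B=\sign A+1$ either way; this proves (iii).

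I expect the interlacing inequalities to be the only step with real mathematical content; everything else is a one-line rank estimate and elementary bookkeeping with the three sign counts, and the boundary index $k=n$, at which $\lambda_{k+1}$ is undefined, is harmless for the counting. It is worth noting that the sign hypothesis $b_{ii}>a_{ii}$ is exactly what makes $B-A$ positive semidefinite; under the reverse inequality the same argument gives $\sign B\in\{\sign A,\sign A-2\}$ in (ii) and $\sign B=\sign A-1$ in (iii).
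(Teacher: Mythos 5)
Your proof is correct, and for parts (ii) and (iii) it takes a genuinely different route from the paper. For part (i) the two arguments are essentially the same observation — a perturbation of rank one changes the rank, hence the nullity, by at most one — though your formulation via $\operatorname{rank}(B-A)=1$ is a little cleaner than the paper's "one column changes" phrasing. For parts (ii) and (iii) the paper gives no details at all: it simply points to an argument using the sequence of leading principal minors (Jacobi-type signature counting), citing an external reference. You instead write $B=A+\tau e_ie_i^{\top}$ with $\tau>0$ and invoke the classical interlacing $\lambda_k\le\mu_k\le\lambda_{k+1}$ for a positive semidefinite rank-one perturbation, then do the bookkeeping with $n_+,n_-,n_0$. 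Your interlacing sketch is sound (the Courant–Fischer argument with the subspace $W\cap e_i^{\perp}$ of dimension at least $k$ is the standard one), the resulting inequalities $n_+(B)-n_+(A)\in\{0,1\}$ and $n_-(B)-n_-(A)\in\{-1,0\}$ do pin down all three conclusions exactly as you say, and the boundary case $k=n$ genuinely never arises in the counting. What your approach buys is a self-contained, basis-free proof that does not require any genericity of the leading principal minors (the minor-based criterion needs care when minors vanish, which the cited reference must handle); what the paper's approach buys is brevity, since it outsources the content to a known theorem.
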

\begin{hproof}
$i)$ The rank of the matrix $A$ is the dimension of its column space, which in turn is equal to the number of linearly independent columns. By changing the diagonal entry $a_{ii}$ for some $i\in\left\{1,\dots,n\right\}$, the column $i$ will also change, hence the rank of $A$ increases by one, stays the same, or decreases by one. However, the change has no effect on the size of $A$. From the Rank-Nullity Theorem it follows that, as the rank changes, the nullity of $A$ will either decrease by one, stay the same or increase by one.\\
$ii)$, $iii)$ These statements can be proved by considering the sequence of leading principal minors of the matrix $A$, as in the proof of Theorem 4 in \cite{jones}.
\end{hproof}
\subsection{Linking number}
Let $D$ be a diagram of the oriented link $L$, and $c$ a crossing. There are two possible configurations near $c$, as illustrated in Figure \ref{fig:orient}. The crossing on the left is said to be \textit{positive}, whereas the crossing on the right is \textit{negative}. Let
\begin{equation*}
\epsilon(c) =
\left\{
	\begin{array}{ll}
		1  & \mbox{if $c$ is a positive crossing,} \\
		-1 & \mbox{if $c$ is a negative crossing,} 
\end{array}
\right.
\end{equation*}
and let $L_1$ and $L_2$ be disjoint sublinks of $L$, such that $L=L_1\sqcup L_2$. In the diagram of $L$, a crossing may be classified according to the origin of the two strands that form it: $L_1$ with itself, $L_2$ with itself, or $L_1$ with $L_2$. The \textit{linking number} of $L_1$ and $L_2$ is defined as
\begin{equation*}
lk_D(L_1,L_2)=\frac{1}{2}\sum_{c\in L_1\cap L_2}\epsilon(c),
\end{equation*}
where we write $c\in L_1\cap L_2$ if one of the strands in the crossing belongs to $L_1$, and the other to $L_2$. Once an orientation is fixed, the linking number does not depend on the choice of diagram, so we can refer to it as $lk(L_1,L_2)$. Thus the linking number is an invariant of the link and the chosen sublinks, and a measure of the number of times one sublink winds around the other.
\begin{figure}[h]
\labellist
\small\hair 2pt
\pinlabel positive at 85 0
\pinlabel negative at 277 0
\endlabellist
\centering
\includegraphics[scale=0.5]{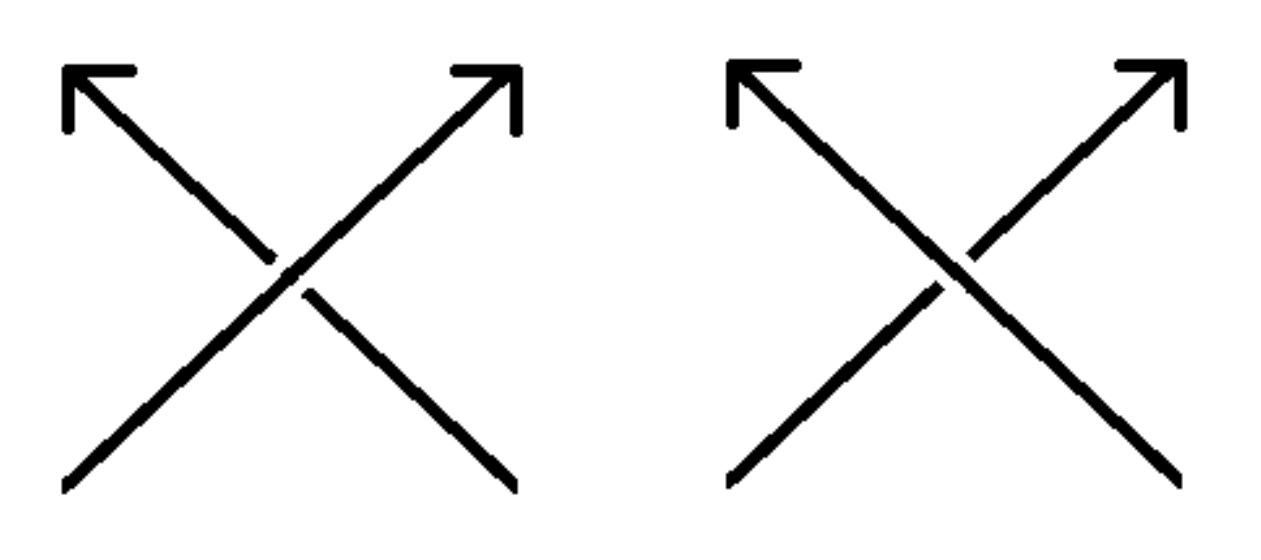}
\caption{Crossing type in an oriented link}
\label{fig:orient}
\end{figure}
\begin{prop}\cite[Theorem 1]{kohn}\label{primpropo}
Let $L=L_1\sqcup L_2$ be an oriented link in $\mathbb{R}^3$, where $L_1$ and $L_2$ are disjoint sublinks of $L$. Then the unlinking number of $L$ satisfies
\begin{equation*}
u(L) \geq u(L_1)+u(L_2)+|lk(L_1,L_2)|,
\end{equation*}
where $lk(L_1,L_2)$ is the linking number of $L_1$ and $L_2$. 
\end{prop}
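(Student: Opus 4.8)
The plan is to fix a diagram $D$ of $L$ that realises the unlinking number, so that $L$ becomes the trivial link after changing exactly $u(L)$ of the crossings of $D$, and then to partition those crossing changes according to the sublink structure. Every crossing of $D$ is formed by two strands, each lying in $L_1$ or in $L_2$, so the set $\mathcal{C}$ of changed crossings splits as a disjoint union $\mathcal{C} = \mathcal{C}_1 \sqcup \mathcal{C}_2 \sqcup \mathcal{C}_{12}$, where $\mathcal{C}_1$ collects the changed self-crossings of $L_1$, $\mathcal{C}_2$ the changed self-crossings of $L_2$, and $\mathcal{C}_{12}$ the changed crossings between $L_1$ and $L_2$. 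Then $u(L) = |\mathcal{C}_1| + |\mathcal{C}_2| + |\mathcal{C}_{12}|$, and it suffices to prove $|\mathcal{C}_i| \ge u(L_i)$ for $i = 1,2$ and $|\mathcal{C}_{12}| \ge |lk(L_1,L_2)|$.

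For the first of these I would restrict $D$ to the subdiagram $D_1$ carrying only the components of $L_1$. A crossing change belonging to $\mathcal{C}_2 \cup \mathcal{C}_{12}$ involves at least one strand of $L_2$ and so does not appear in $D_1$ at all; hence performing the full unlinking sequence on $D$ and afterwards deleting $L_2$ has the same effect on $L_1$ as performing only the $|\mathcal{C}_1|$ changes in $\mathcal{C}_1$ directly on $D_1$. Since a sublink of the trivial link is itself trivial, the outcome is the unlink, so $D_1$ together with these $|\mathcal{C}_1|$ crossing changes exhibits an unlinking of $L_1$; as $u(L_1)$ is the minimum taken over all diagrams of $L_1$, we get $u(L_1) \le |\mathcal{C}_1|$. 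The symmetric argument, with the roles of $L_1$ and $L_2$ interchanged, yields $u(L_2) \le |\mathcal{C}_2|$.

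For the linking-number term I would use that $lk(L_1,L_2) = \tfrac12\sum_{c \in L_1 \cap L_2}\epsilon(c)$ depends only on the crossings between the two sublinks: changing a self-crossing of $L_1$ or of $L_2$ leaves every term of this sum untouched, while changing a crossing between $L_1$ and $L_2$ flips exactly one $\epsilon(c)$ between $+1$ and $-1$ and so alters $lk(L_1,L_2)$ by exactly $\pm 1$. Carrying out the unlinking sequence therefore moves the linking number from $lk(L_1,L_2)$ to its value for the trivial link, namely $0$, in $|\mathcal{C}_{12}|$ steps each of size one, so the triangle inequality gives $|\mathcal{C}_{12}| \ge |lk(L_1,L_2)|$. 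Adding the three inequalities produces $u(L) \ge u(L_1) + u(L_2) + |lk(L_1,L_2)|$, as claimed.

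The three ingredients are each elementary; the only point needing care is the bookkeeping in the second paragraph, namely that the three classes of crossing changes act independently — a change at a crossing of $D_1$ is neither affected by nor affecting the changes involving $L_2$ — and this is exactly where one uses that the unlinking number is defined here with respect to a single fixed diagram, without intervening ambient isotopies. Were one instead to allow isotopies between successive crossing changes, the main obstacle would be to track the subdiagram $D_1$ (and which changed crossings it contains) through those isotopies; for the definition in force in this paper no such difficulty arises, so I expect the proof to go through cleanly along the lines above.
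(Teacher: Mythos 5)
Your proposal is correct and follows essentially the same route as the paper: partition the crossing changes of a minimal unlinking sequence into self-crossings of $L_1$, self-crossings of $L_2$, and mixed crossings, then bound the three classes below by $u(L_1)$, $u(L_2)$ and $|lk(L_1,L_2)|$ respectively. Your version just spells out two steps the paper leaves implicit (restricting to the subdiagram $D_1$ to see that the $\mathcal{C}_1$-changes unlink $L_1$, and that each mixed change moves the linking number by exactly one).
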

\begin{proof}
Consider some crossing in a diagram $D$ of the link $L$. If both strands belong to the sublink $L_1$, then changing the crossing will have no effect on the underlying structure of the sublink $L_2$ or on the linking number of $L_1$ and $L_2$. Similarly, if both strands belongs to $L_2$, then changing the crossing will not affect $L_1$ or the linking number of the sublinks. However, if one strand belongs to $L_1$ and the other to $L_2$, then changing the crossing will have no effect on the two sublinks, but the linking number will change by one. Let us now consider an unlinking sequence that realises $u(L)$. The number of crossing changes between $L_1$ and $L_2$ is then bounded below by $|lk(L_1,L_2)|$, and the number of crossing changes completely in $L_1$ or completely in $L_2$ is bounded below by $u(L_1)$ and $u(L_2)$ respectively, thus proving the inequality.
\end{proof}
To illustrate the application of this method, consider the link $L10n96$, oriented as in Figure~\ref{fig:nomad}. Let the sublinks $L_1$ and $L_2$ both be Hopf links -- red with blue, and green with purple, respectively. The linking number of $L_1$ and $L_2$ is $3$, and it follows from an easy application of Proposition \ref{primpropo} that the unlinking number of a Hopf link is $1$, so that $u(L10n96) \geq 5$. Therefore, the link has unlinking number $5$, as it can be converted to the trivial link with $4$ components by changing the $5$ crossings indicated in the figure.
\begin{figure}[h]
\centering
\includegraphics[scale=0.3]{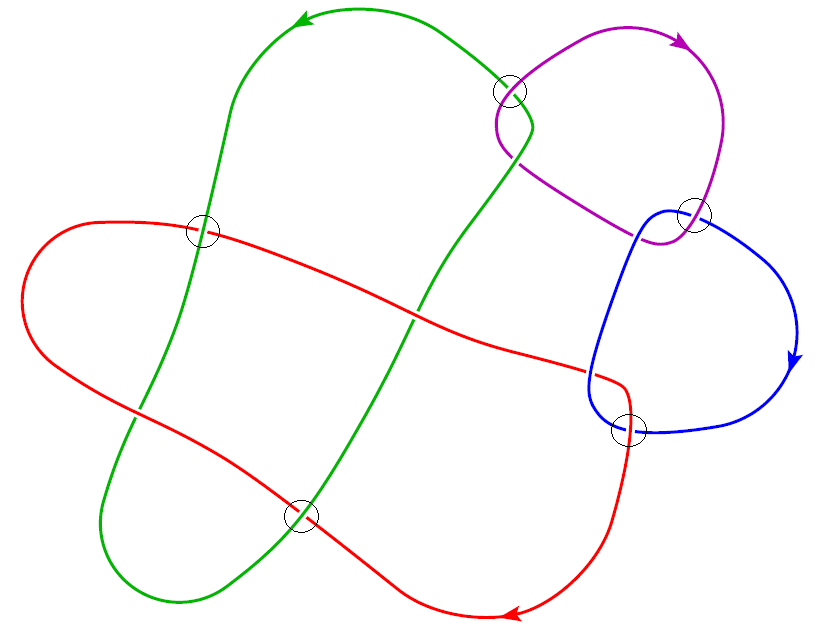}
\caption{One possible way to unlink $L10n96$}
\label{fig:nomad}
\end{figure}
\subsection{Link signature}\label{signa}
For the next method, let us begin by describing a formula for the signature of a link. Consider a diagram of the link $L$ with chessboard shading, so that no two adjacent regions share the same colour. Assign an \textit{incidence number} $\iota(c)$ to each crossing in the diagram, by letting 
\begin{equation*}
\iota(c) =
\left\{
	\begin{array}{ll}
		1  & \mbox{if $c$ is a right-handed crossing,} \\
		-1 & \mbox{if $c$ is a left-handed crossing.} 
\end{array}
\right.
\end{equation*}
Handedness is illustrated in Figure \ref{fig:chess}. Note that this is defined using the shading, and is independent of orientation. Let the $n+1$ unshaded regions in the diagram of $L$ be $R_0,R_1,\dots,R_n$. Construct the square matrix $G'=(g_{ij})$, with entries
\begin{equation*}
g_{ij} =
\left\{
	\begin{array}{ll}
		-\sum \iota(c)  & \mbox{if $i\neq j$, summing over crossings $c$ incident to both $R_i$ and $R_j$,} \\
\\
		-\sum\limits_{k=0, k\neq i}^{k=n}g_{ik} & \mbox{if $i=j$.} 
\end{array}
\right.
\end{equation*}
\begin{figure}[t]
\centering
\labellist
\small\hair 2pt
\pinlabel right-handed at 180 0
\pinlabel left-handed at 625 0
\endlabellist
\includegraphics[scale=0.2]{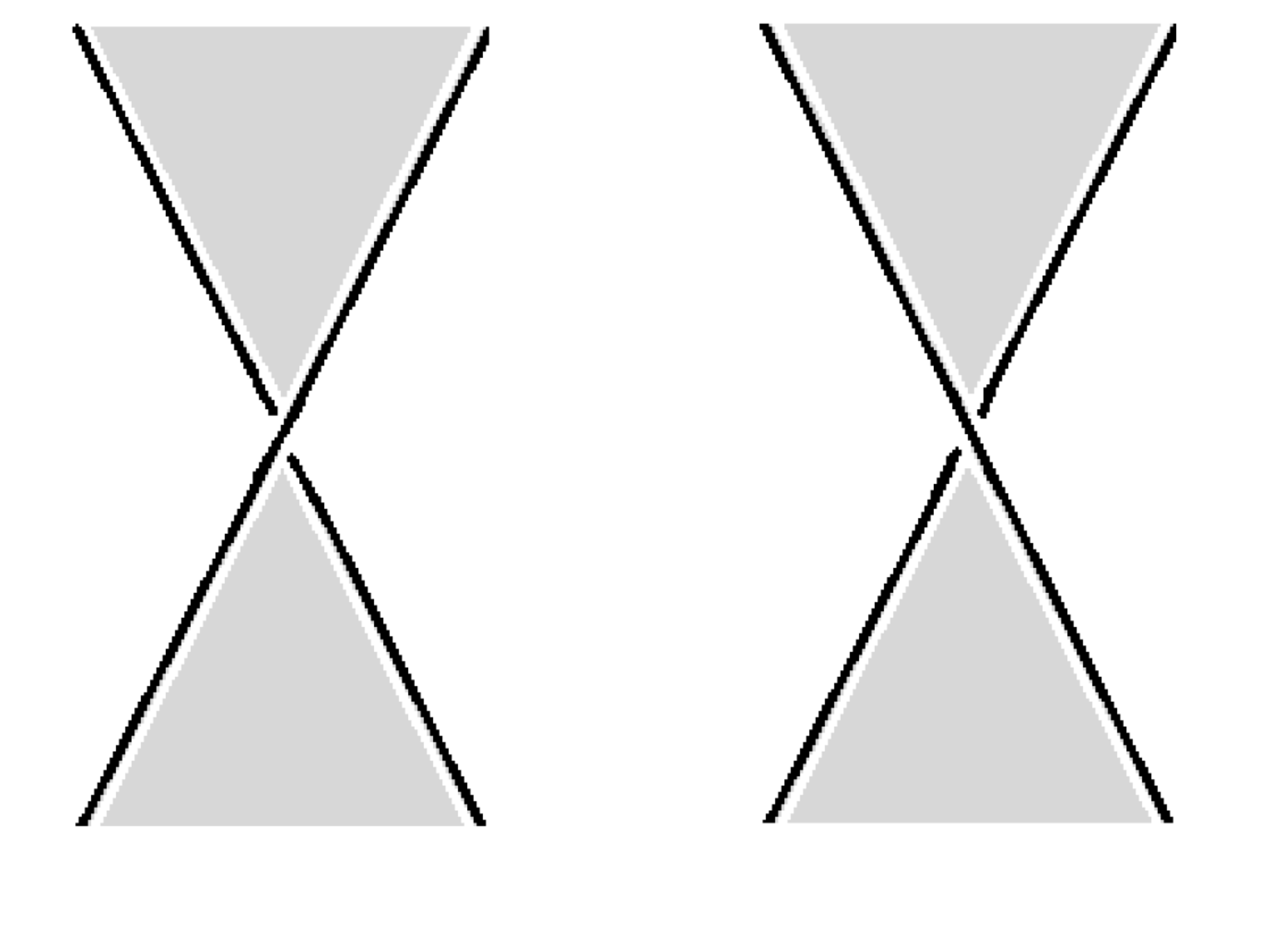}
\caption{Crossings in a chessboard-shaded diagram}
\label{fig:chess}
\end{figure}

After deleting the $0^{th}$ row and column of $G'$, another matrix is obtained, namely the symmetric square integer \textit{Goeritz matrix} $G$ of the chessboard-shaded link diagram. Let us now orient the link and consider a crossing $c$ in its diagram. If we discard information on under-crossing and over-crossing, then there are two possible configurations near $c$, type I and type II, as illustrated in Figure \ref{fig:over}. Define 
\begin{equation*}
\mu=\sum_{\tiny\mbox{type II}}\iota(c), 
\end{equation*}
where the sum is taken over all crossings of type II in the diagram of the link. Then the \textit{signature} of the link is given by
\begin{equation*}\label{ostea}
\sigma(L)=\sign G-\mu,\tag{*}
\end{equation*}
where $\sign G$ is the signature of the Goeritz matrix of the diagram. This definition of signature is due to Gordon-Litherland \cite{litherland}, who proved it to be equivalent to an older definition using Seifert surfaces. Signature is a link invariant --- once an orientation is fixed, the signature remains constant under isotopy. This was proved in \cite{trot} for knots and in \cite{mura} for links. 
\begin{figure}[h]
\centering
\labellist
\small\hair 2pt
\pinlabel \mbox{type I} at 65 0
\pinlabel \mbox{type II} at 235 0
\endlabellist
\includegraphics[scale=0.5]{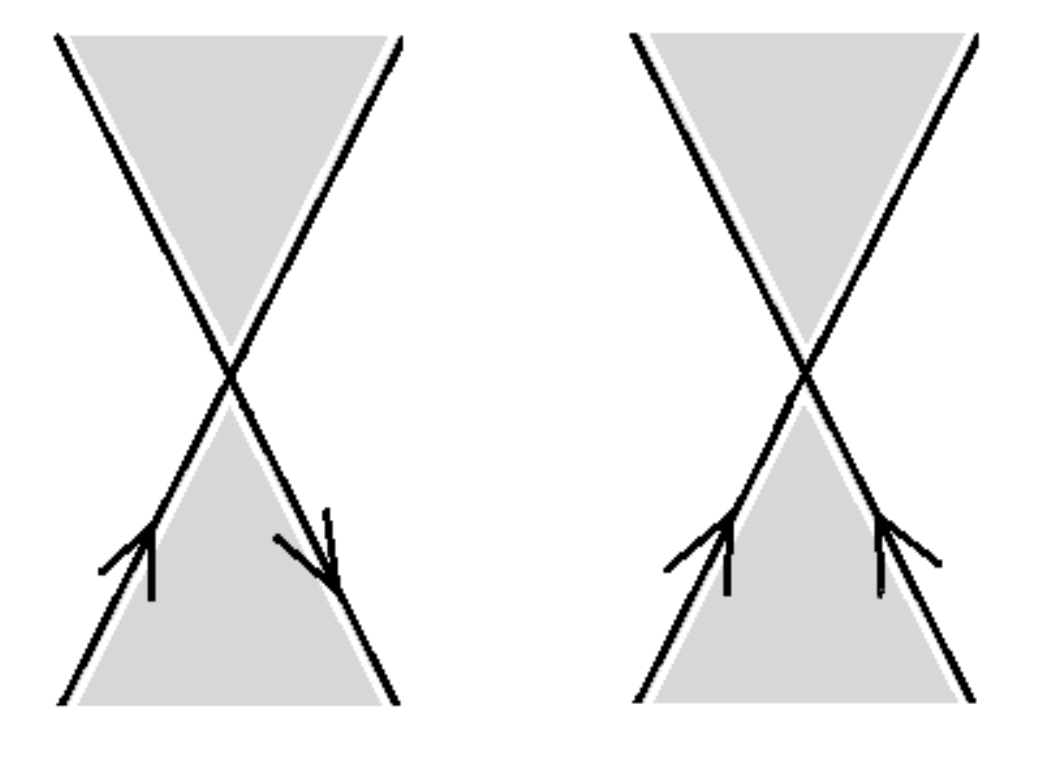}
\caption{Crossings in an oriented chessboard-shaded diagram}
\label{fig:over}
\end{figure} 
\begin{prop}\cite[Theorem 10.1]{mura}\cite[Corollary 3.9]{coch}\label{semn}
Let $L$ be an oriented link in $\mathbb{R}^3$. Then the unlinking number of $L$ satisfies
\begin{equation*}
u(L)\geq \frac{|\sigma(L)|}{2},
\end{equation*}
where $\sigma(L)$ is the signature of the link.
\end{prop}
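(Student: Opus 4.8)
The plan is to control how much the signature can change under a single crossing change and then telescope. Suppose $u(L)=k$, and fix a diagram $D$ of $L$ together with an unlinking sequence of $k$ crossing changes realising this minimum. Performing these changes one at a time --- always within the same underlying shadow, merely toggling the over/under information at one crossing at a step --- produces links $L=L_0,L_1,\dots,L_k$, where $L_k$ is the trivial link and hence $\sigma(L_k)=0$. If I can show that $|\sigma(L_{j-1})-\sigma(L_j)|\le 2$ for each $j$, then a telescoping estimate gives $|\sigma(L)|=\bigl|\sum_{j=1}^{k}\bigl(\sigma(L_{j-1})-\sigma(L_j)\bigr)\bigr|\le 2k$, which is exactly the asserted bound. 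So the whole statement reduces to the claim: if $L'$ is obtained from $L$ by changing a single crossing, then $|\sigma(L)-\sigma(L')|\le 2$. (This is essentially the route taken in the cited references of Murasugi and Cochran--Lickorish.)

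To prove the claim I would work with a fixed chessboard-shaded diagram $D$ of $L$ and the crossing $c$ that is switched. A crossing change in $D$ leaves the shadow, the shading, the collection of unshaded regions $R_0,\dots,R_n$, and the type (I or II) of every crossing unchanged; its only effect is to reverse the handedness of $c$, i.e.\ to replace $\iota(c)$ by $-\iota(c)$. Inspecting the definition of $G'=(g_{ij})$, this alters the Goeritz matrix $G$ by a rank-one symmetric perturbation: if the two unshaded regions meeting at $c$ are $R_i$ and $R_j$ with $i,j\ge 1$, then $G$ is replaced by $G-2\iota(c)\,vv^{T}$ with $v=e_i-e_j$ (and with $v=e_i$ if the other region at $c$ is $R_0$, which has been deleted). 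The point is that even when several crossings are incident to the same pair of regions, only the $c$-term changes in the sum defining $g_{ij}$, so the perturbation is genuinely of rank one. In parallel, $\mu=\sum_{\text{type II}}\iota(c)$ changes by $0$ if $c$ has type I, and by $-2\iota(c)$ if $c$ has type II.

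Next I would bound $\sign G'-\sign G$. Applying an invertible congruence that sends $v$ to a standard basis vector (Sylvester's law of inertia leaves the signature unchanged) turns the rank-one perturbation into a change of a single diagonal entry, so Lemma~\ref{prima}(ii),(iii) applies directly: subtracting a positive rank-one matrix (the case $\iota(c)=1$) forces $\sign G'-\sign G\in\{-2,-1,0\}$, and adding one (the case $\iota(c)=-1$) forces $\sign G'-\sign G\in\{0,1,2\}$. Finally I would assemble $\sigma(L')-\sigma(L)=(\sign G'-\sign G)-(\mu'-\mu)$ in the four cases given by the type of $c$ and the sign of $\iota(c)$. When $c$ has type I the $\mu$-term vanishes and the bound is immediate; when $c$ has type II the correction $-(\mu'-\mu)=2\iota(c)$ has the opposite sign to the admissible range of $\sign G'-\sign G$, so the two contributions partially cancel and the total again lands in $[-2,2]$. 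This yields $|\sigma(L)-\sigma(L')|\le 2$ and finishes the argument.

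The step I expect to be the main obstacle is the middle one: pinning down precisely how the Goeritz matrix transforms under a crossing change and confirming that it is a rank-one perturbation in every configuration --- in particular dealing with the deleted region $R_0$ and with crossings that are multiply incident to a single pair of regions --- together with the sign bookkeeping at the end that prevents the $\mu$-correction and the change in $\sign G$ from reinforcing one another. By contrast, the reduction to the single-crossing claim, the congruence trick, and the telescoping are routine.
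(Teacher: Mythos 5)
Your proposal is correct, and its overall strategy --- telescoping over the unlinking sequence, reducing to the claim that a single crossing change moves $\sigma$ by at most $2$, and feeding a perturbation of the Goeritz matrix into Lemma~\ref{prima} --- is the same as the paper's. Where you differ is in how you arrange for Lemma~\ref{prima} to apply. The paper, at each step, \emph{re-chooses} the chessboard shading so that the crossing $c$ being changed is of type I (hence $\mu$ is untouched) and relabels the unshaded regions so that $c$ is adjacent to $R_0$ and $R_n$; after deleting the $0$th row and column, the Goeritz matrix then changes in the single diagonal entry $g_{nn}$ and Lemma~\ref{prima} applies verbatim, with no case analysis and no congruence. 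You instead keep one shading fixed, record the change as the rank-one symmetric perturbation $G\mapsto G-2\iota(c)\,vv^T$, diagonalise $v$ by a congruence (Sylvester), and then run the four-case bookkeeping in which the $\mu$-correction for type II crossings partially cancels the change in $\sign G$; your sign conventions ($\mu'-\mu=-2\iota(c)$ for type II, $\sign G'-\sign G\in\{-2,-1,0\}$ or $\{0,1,2\}$ according to $\iota(c)$) all check out, and the degenerate configurations you flag (a crossing meeting $R_0$, or meeting the same unshaded region twice, or several crossings incident to one pair of regions) are handled correctly since the perturbation has rank at most one in every case. The trade-off: the paper's shading trick is shorter but quietly relies on the Gordon--Litherland formula giving the same $\sigma$ for either shading of the diagram, since the shading is switched from step to step; your version is more laborious but works entirely within one fixed shaded diagram per step and makes the cancellation between $\sign G$ and $\mu$ explicit. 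Both are complete proofs.
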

\begin{proof}
Consider the trivial link with $k$ components and the standard diagram consisting of $k$ non-nested circles with no crossings. For one choice of shading, the corresponding Goeritz matrix $G$ of this link is the zero matrix with $k-1$ rows and columns, which has $\sign G=0$. Since there are no crossings in this diagram of the link, we have $\mu=0$. It follows from (\ref{ostea}) that the signature of the trivial link is $0$, irrespective of the number of components. Now, given an oriented link $L$ with diagram $D$, we aim to obtain the trivial link by changing crossings in $D$. At each step, let $c$ denote the crossing to be changed, and choose the chessboard colouring of the diagram that makes $c$ a double point of type I. Also, relabel the white regions so that $c$ is adjacent to $R_0$ and $R_n$. In the matrix $G'$ of the link, the effect of the crossing change amounts to changing entries $g_{00}$, $g_{0n}$, $g_{n0}$ and $g_{nn}$. Therefore, the new Goeritz matrix of the link is identical to the original one, except for the diagonal entry $g_{nn}$. By Lemma \ref{prima}, $\text{sign }G$ changes by at most $2$. Since $c$ is a double point of type I, changing the crossing will not affect $\mu$. It follows from (\ref{ostea}) that $\sigma(L)$, in turn, changes by at most $2$ . The link is eventually converted to the trivial link, so that its signature changes by at most twice the unlinking number throughout the process, which implies that $|\sigma(L)|\leq2u(L)$, or equivalently, $u(L)\geq|\sigma(L)|/2$.
\end{proof}   
To illustrate the application of this method, consider the link $L10a99$. Using (\ref{ostea}), one may show that the link has signature $-5$ when oriented as in Figure \ref{fig:semn}, so that $u(L10a99)\geq 3$. Therefore, the link has unlinking number $3$, as it can be converted to the trivial link with $2$ components by changing the $3$ crossings indicated in the figure.
\begin{figure}[h]
\centering
\includegraphics[scale=0.3]{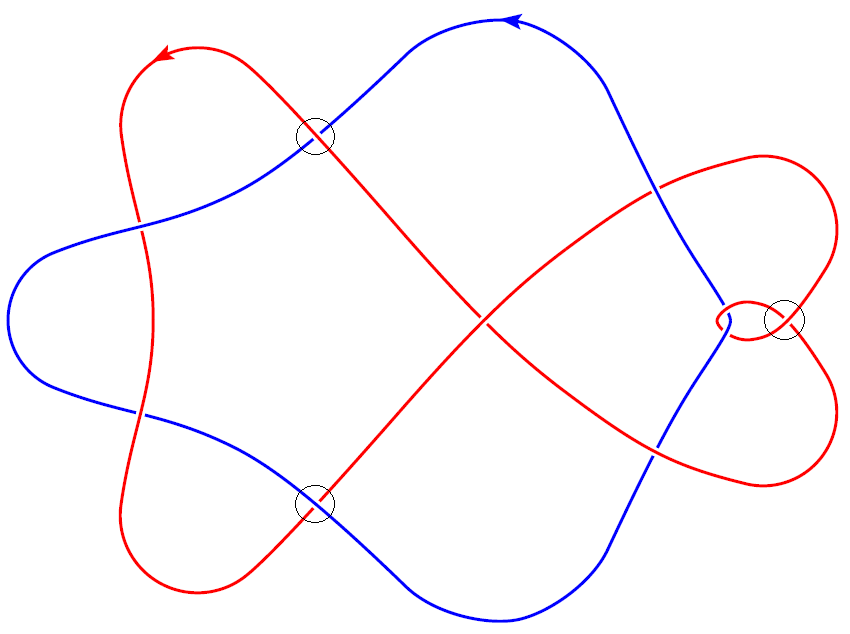}
\caption{One possible way to unlink $L10a99$}
\label{fig:semn}
\end{figure} 
\subsection{Link determinant and link nullity}
The \textit{determinant} of a link is defined to be the determinant of its Goeritz matrix. Similarly, the \textit{nullity} of a link is equal to the nullity of its Goeritz matrix, provided that a connected diagram is considered. 
\begin{prop}\cite[Corollary 3.21]{kauffman}\cite[Corollary 4.3]{kawa}\cite[Lemma 2.4]{owens}\label{curent}
Let $L$ be a link in $\mathbb{R}^3$, with $k$ components, nullity $\eta(L)$ and determinant $\det L$. Let $u(L)$ be the unlinking number of $L$. 
\begin{enumerate}[a)]
\item Then
\begin{equation*}
u(L)\geq k-1-\eta(L).
\end{equation*}
\item If $u(L)\leq k-1$, then $\det L=2^{k-1}c^2$, for some $c\in \mathbb{Z}$.
\end{enumerate}
\end{prop}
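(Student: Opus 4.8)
The two parts rest, in the same spirit as the proof of Proposition~\ref{semn}, on understanding how the Goeritz matrix of a connected diagram changes under a single crossing change. Fix a connected diagram $D$ of $L$ — every link has one, and a disconnected diagram can be made connected by Reidemeister~II moves without changing the unlinking number — fix one of its two chessboard colourings and the distinguished region $R_0$, and write $G=G(D)$, so that $\eta(L)$ is the nullity of $G$ and $\det L=|\det G|$. For part~a) I would argue that one crossing change alters the nullity of a link by at most $1$: a crossing change fixes the underlying projection, so after relabelling the white regions so that the changed crossing meets $R_0$ and $R_n$, only the diagonal entry $g_{nn}$ is affected (the computation already carried out in the proof of Proposition~\ref{semn}, here not even needing the type~I condition), and Lemma~\ref{prima}(i) bounds the change in nullity by $1$. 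Since the $k$-component unlink $U_k$ has $\eta(U_k)=k-1$ (a connected genus-$0$ Seifert surface of first Betti number $k-1$ with vanishing Seifert form), an unlinking sequence of length $u(L)$ gives $|\eta(L)-(k-1)|\le u(L)$, hence $u(L)\ge k-1-\eta(L)$.

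For part~b) I may assume $\det L\neq 0$, since otherwise $c=0$ works; then $\eta(L)=0$, so part~a) already forces $u(L)=k-1$. Here I need the precise form of the update: since a crossing change reverses the handedness of the crossing, if $R_i,R_j$ are the two white regions meeting it then $G$ is replaced by $G+2\epsilon\,vv^{T}$, where $\epsilon=\pm1$ and $v=e_i-e_j$ is an integer vector in the coordinates indexed by $R_1,\dots,R_n$ (so $v=\pm e_j$ if one region is $R_0$, and $v=0$ in the degenerate case $R_i=R_j$, which is harmless). Running this along an unlinking sequence of length $m=k-1$ ending at a connected diagram $D'$ of $U_k$ with the same projection as $D$, I obtain $G=G(D')+2VEV^{T}$, where $V\in\mathbb{Z}^{n\times m}$ has the update vectors $v_t$ as columns and $E=\operatorname{diag}(\epsilon_1,\dots,\epsilon_m)$ has entries $\pm1$.

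Now I would use the standard fact that the Goeritz matrix of a connected diagram presents $H_1$ of the double branched cover, together with $H_1(\Sigma_2(U_k))\cong\mathbb{Z}^{k-1}$ (since $\Sigma_2(U_k)=\#^{k-1}(S^1\times S^2)$). Thus the integral kernel of $G(D')$ is a rank-$(k-1)$ direct summand on which the form vanishes, and the form is unimodular on a complement; after a unimodular change of coordinates I may assume $G(D')=0_{k-1}\oplus W$ with $\det W=\pm1$, the columns $v_t$ remaining integer vectors. Write $V=\binom{A}{B}$ with $A$ the top $(k-1)\times m$ block. Performing a Schur-complement reduction of $0_{k-1}\oplus W+2VEV^{T}$ with respect to the block $W+2BEB^{T}$ — which has odd determinant, hence is invertible — and then applying Sylvester's determinant identity, the contributions of $W$ and $B$ collapse to $\pm1$, leaving $\det G=\pm\,2^{k-1}(\det A)^{2}$ (and $\det G=0$ when $m<k-1$, consistent with the case $\det L=0$). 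Since $A$ is an integer matrix, $\det L=2^{k-1}c^{2}$ with $c=\det A\in\mathbb{Z}$.

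The difficulty is bookkeeping rather than conceptual. One must nail down the rank-one update for $G$ — in particular that handedness genuinely reverses, and that the degenerate repeated-region case either does not occur for the diagram chosen or contributes nothing — and, above all, run the determinant computation so that the factor extracted is $2^{k-1}$ times the square of an \emph{integer}, not merely of a rational. That integrality is precisely where the structural input is used: that the update vectors $v_t$, hence $A$, are integral, and that $G(D')$ has a \emph{unimodular} complement to its radical, so the Schur complement introduces no denominators.
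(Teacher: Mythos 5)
Your part a) follows the paper's own route almost verbatim: a crossing change touches only one diagonal entry of the Goeritz matrix after a suitable relabelling of white regions, Lemma~\ref{prima}(i) bounds the resulting change in nullity by $1$, and the unlink has nullity $k-1$, so an unlinking sequence of length $u(L)$ forces $|\eta(L)-(k-1)|\le u(L)$. (The paper certifies $\eta(U_k)=k-1$ by exhibiting a connected diagram whose Goeritz matrix is the zero $(k-1)\times(k-1)$ matrix, rather than via a Seifert surface, but that is cosmetic.) The genuine divergence is part b): the paper does not prove it at all, deferring to Kawauchi (where it is deduced from a stronger statement about multivariable Alexander polynomials) and to Nagel--Owens, whereas you supply a complete argument. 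I have checked your steps and they hold: changing a crossing incident to white regions $R_i,R_j$ flips its incidence number, which perturbs $G'$ by $-2\iota(c)(e_i-e_j)(e_i-e_j)^T$ and hence $G$ by $2\epsilon vv^T$ with $v$ integral; when $\det L\neq 0$ part a) pins down $u(L)=k-1$, so $G=G(D')+2VEV^T$ with $V$ an integer $n\times(k-1)$ matrix; a symmetric integer matrix whose cokernel is free of rank $k-1$ does split as $0_{k-1}\oplus W$ with $W$ unimodular after a unimodular change of basis (its integral kernel is saturated, and comparing cokernels forces $\det W=\pm1$); and the Schur complement with respect to the odd-determinant block $W+2BEB^T$, combined with $\det(I+XY)=\det(I+YX)$, yields $\det G=2^{k-1}(\det A)^2\det E\det W=\pm 2^{k-1}(\det A)^2$ with $A$ a square integer matrix, so $c=\det A$ works; the degenerate case of a crossing meeting only one white region contributes a zero column of $V$ and simply forces $\det G=0$. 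The one input you use that the paper never introduces is that the Goeritz matrix of a connected diagram presents $H_1$ of the double branched cover, together with $\Sigma_2(U_k)=\#^{k-1}(S^1\times S^2)$; both are standard (Gordon--Litherland). What your route buys is a self-contained, elementary proof of b) at exactly the level of generality the paper needs, avoiding Kawauchi's Alexander-polynomial machinery; what the paper's citation-only approach buys is brevity and access to the stronger statements proved in those references.
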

\begin{proof}
Consider the trivial link with $k$ components and a connected diagram consisting of $k$ circles sitting in a row, with two crossings between each adjacent pair of circles and no other crossings. For either choice of shading, the Goeritz matrix $G$ of this link is the zero matrix with $k-1$ rows and columns, which has nullity $k-1$. Now, given a diagram of a link $L$ with $k$ components and nullity $\eta(L)$, construct the matrix $G'$ as in Section \ref{signa} and change a crossing. As before, we can arrange so that the change affects only one entry in the Goeritz matrix of $L$, namely the bottom right element $g_{nn}$. It follows from Lemma \ref{prima} that the nullity of the Goeritz matrix will change by at most $1$, and so too will the nullity of the link. Since $L$ is converted to the trivial link with $u(L)$ crossing changes, its nullity cannot change by more than the unlinking number, giving
$u(L)\geq|(k-1)-\eta(L)|\geq k-1-\eta(L)$. For a proof of part b) see \cite{kawa}, where this statement is shown to follow from a stronger condition involving multivariable Alexander polynomials, or \cite{owens}.
\end{proof}
To illustrate the application of the method described in Proposition \ref{curent} part a), consider the link $L10a169$ with $4$ components and nullity $0$, so that $u(L10a169)\geq 3$. Therefore, the link has unlinking number $3$, as it can be converted to the trivial link with $4$ components by changing the $3$ crossings indicated in Figure \ref{fig:figu}.
\begin{figure}[h]
\centering
\includegraphics[scale=0.33]{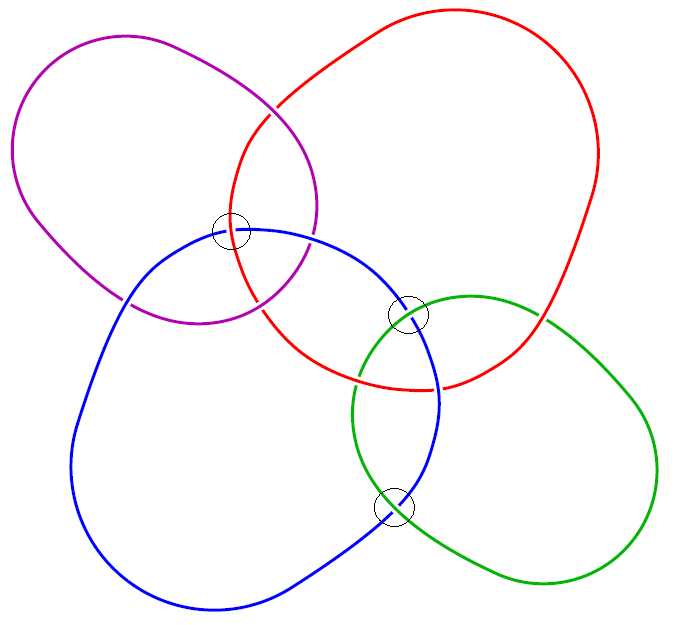}
\caption{One possible way to unlink $L10a169$}
\label{fig:figu}
\end{figure}

For the method described in part b), let $L$ be the link $L10n33$, with $k=2$ components and determinant $\det L=48$. Suppose that $u(L)\leq1$. Then by the proposition, $c^2=24$ for some $c\in\mathbb{Z}$, a contradiction that gives $u(L)>1$. Therefore, the link has unlinking number $2$, as it can be converted to the trivial link with $2$ components by changing the $2$ crossings indicated in Figure \ref{fig:triv}.
\begin{figure}[h]
\centering
\includegraphics[scale=0.32]{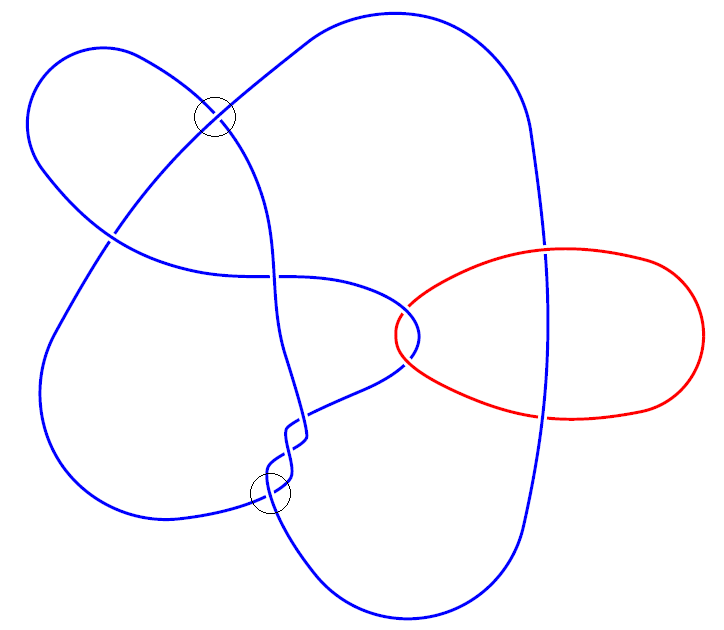}
\caption{One possible way to unlink $L10n33$}
\label{fig:triv}
\end{figure}

Every $n\times n$ integer matrix $M$ can be transformed by a finite sequence of row and column operations into a diagonal matrix, whose diagonal entries form a sequence $\{a_1,a_2,\dots,a_r,0,\dots,0\}$, where $a_i$ is nonnegative and $a_i$ divides $a_{i+1}$. This diagonal matrix is independent of the sequence of row and column operations, and is called the \textit{Smith normal form} of $M$. The matrix $M$ presents the quotient group $\mathbb{Z}^n/M\mathbb{Z}^n$, which is cyclic if and only if the Smith normal form $S$ of $M$ satisfies $s_{ii}=1$ for $i=1,\dots,n-1$, and $s_{nn}=\det M$.
\begin{prop}\cite[Lemma 4.1]{owens}\label{contr}
Let $L$ be a link with $2$ components in $\mathbb{R}^3$ and determinant $\det L$, such that its unlinking number satisfies $u(L)<3$. Suppose that the Goeritz matrix of $L$ presents a finite cyclic group. Then at least one of the following statements holds:
\begin{itemize}
\item $\det{L}$ is a multiple of 4, and the absolute value of at least one of the signatures of $L$ is 1,
\item $\det{L}$ is a multiple of 16,
\item $\det{L}=2t^2$, for some $t\in\mathbb{Z}$.
\end{itemize}
\end{prop}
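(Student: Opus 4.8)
The plan is to dispose of the easy cases, then to watch how two crossing changes act on a single Goeritz matrix --- in the spirit of the proofs of Propositions~\ref{semn} and~\ref{curent} --- and finally to read off the three alternatives from an elementary computation with Smith normal forms. First the reductions. If $\det L=0$ then $\det L=2\cdot 0^{2}$ and the third alternative holds, so assume $\det L\neq 0$; then the Goeritz matrix of every connected diagram of $L$ is a nonsingular integer matrix presenting a finite abelian group of order $\det L$, which by hypothesis is $\mathbb{Z}/\det L$. If $u(L)\le 1$, then Proposition~\ref{curent}(b) with $k=2$ gives $\det L=2c^{2}$, again the third alternative. So from now on $u(L)=2$, $\det L\neq 0$, and $\mathbb{Z}/\det L$ is cyclic.

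Since $L$ is non-split, every diagram of it is connected; choose one, $D$, together with crossings $c_{1},c_{2}$ of $D$ such that changing first $c_{1}$ and then $c_{2}$ turns $D$ into a diagram $D''$ of the $2$-component unlink $U_{2}$. Fix one chessboard shading of the common underlying projection and a labelling $R_{0},R_{1},\dots,R_{n}$ of its white regions, and let $G$ be the resulting $n\times n$ Goeritz matrix of $D$, with $e_{1},\dots,e_{n}$ the standard basis indexed by $R_{1},\dots,R_{n}$. Exactly as in the proof of Proposition~\ref{curent}, but with no relabelling, changing the crossing $c_{i}$ flips only the incidence number $\iota(c_{i})$, so it alters the enlarged matrix $G'$ only in the rows and columns of the two white regions meeting $c_{i}$; hence its net effect on $G$ is to add a symmetric rank-one term $\delta_{i}\,v_{i}v_{i}^{T}$, where $\delta_{i}\in\{+2,-2\}$ and $v_{i}\in\mathbb{Z}^{n}$ is a primitive vector of the shape $e_{a}$ or $e_{a}-e_{b}$. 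Since the two corrections involve the incidence numbers of different crossings, they are independent of each other, so after both changes
\begin{equation*}
G'' = G+\delta_{1}v_{1}v_{1}^{T}+\delta_{2}v_{2}v_{2}^{T}.
\end{equation*}
Now $G''$ is a Goeritz matrix of a connected diagram of $U_{2}$; since the group presented by the Goeritz matrix of a connected diagram is a link invariant and $U_{2}$ admits a diagram with zero Goeritz matrix, $G''$ has nullity $1$ and Smith normal form $\mathrm{diag}(1,\dots,1,0)$. Moreover a symmetric rank-one update $\delta_{i}v_{i}v_{i}^{T}$ changes the nullity of a real symmetric matrix by at most $1$ and --- $\delta_{i}$ being of one sign --- its signature by $0$ or $\pm 2$ (eigenvalue interlacing, the natural extension of Lemma~\ref{prima}); running this alongside the choice of shading per crossing that makes the changed crossing type~I, one controls, through the Gordon--Litherland formula~(\ref{ostea}), the possible signatures of $L$ against the vanishing signature of $U_{2}$, the behaviour at step $i$ depending on whether $c_{i}$ joins a component of $L$ to itself or its two components to one another.

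It remains to settle a self-contained question about integral forms: for $G''$ as above, with $G=G''-\delta_{1}v_{1}v_{1}^{T}-\delta_{2}v_{2}v_{2}^{T}$, $\delta_{i}=\pm 2$, $v_{i}$ primitive, and $\mathbb{Z}^{n}/G\mathbb{Z}^{n}$ cyclic, determine the possible Smith normal forms and signatures of $G$. Reducing modulo powers of $2$ and following the $2$-primary part of $\det L=|\det G|$, one argues by cases according to whether the spans of $v_{1}$ and $v_{2}$ agree or are independent, and according to the four sign patterns of $(\delta_{1},\delta_{2})$. The outcome is exactly the stated trichotomy: either the two steps ``double'' the same $2$-primary cyclic summand, forcing $16\mid\det L$; or they cancel up to a square, so that the obstruction of Proposition~\ref{curent}(b) survives and $\det L=2t^{2}$; or precisely one of the two $\pm 2$'s survives, in which case $4\mid\det L$ and --- just one step of signature change having occurred --- one of the two signatures of $L$ has absolute value $1$. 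I expect this final enumeration to be the main obstacle: one must correctly list all pairs $(\det L,\sigma(L))$ produced by two primitive rank-one $\pm 2$-modifications of a cyclic linking form, keeping careful track of the interaction with the free summand of $G''$, which is what allows $\det L$ to be large in the first place. This is the bookkeeping carried out in~\cite{owens}.
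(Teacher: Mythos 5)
The paper does not actually prove this proposition: it quotes it from \cite{owens} and records only that the proof runs through a $4$-manifold bounded by the double branched cover $Y$ of $L$, whose intersection form constrains the \emph{linking form} of $Y$ and hence the determinant and signatures of $L$. Your proposal instead tries to stay entirely at the level of Goeritz matrices of diagrams. Its first half is sound: the reductions for $\det L=0$ and $u(L)\leq 1$ are correct, and a crossing change does alter the Goeritz matrix by a rank-one perturbation $\pm 2\,vv^{T}$ with $v=e_a$ or $e_a-e_b$, so that $G''=G+\delta_1v_1v_1^T+\delta_2v_2v_2^T$ with $G''$ of nullity one presenting $\mathbb{Z}$.

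The gap is the step you yourself flag as ``the main obstacle'': the final enumeration is not carried out, and in fact it cannot be, because the data you retain does not imply the trichotomy. Take $G=\mathrm{diag}(2,3)$, $v_1=e_1$, $v_2=e_2$, $\delta_1=\delta_2=-2$: then $G''=\mathrm{diag}(0,1)$ has nullity one and presents $\mathbb{Z}$, the cokernel of $G$ is the cyclic group $\mathbb{Z}/6$, and yet $\det G=6$ is neither a multiple of $4$, nor a multiple of $16$, nor of the form $2t^2$. This configuration is not artificial --- it is realised by a diagram of the connected sum of a Hopf link and a trefoil, which is unlinked by two crossing changes --- so no amount of Smith-normal-form or $2$-adic bookkeeping on the tuple $(G,v_1,v_2,\delta_1,\delta_2)$ can yield the statement. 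The missing input is precisely what the paper's remark points to: the $\mathbb{Q}/\mathbb{Z}$-valued linking form of $Y$ (strictly finer than the underlying group) together with the fact that $Y$ bounds a $4$-manifold whose intersection form presents that form, with Donaldson-type restrictions; none of this is visible from a single chessboard-shaded diagram. Relatedly, the clause ``at least one of the signatures of $L$'' quantifies over orientations, while the Goeritz matrix and your rank-one updates are orientation-blind; you never bring in the correction term $\mu$ for the different orientations, so the signature half of the first alternative is also unaddressed. Finally, the closing claim that ``this is the bookkeeping carried out in \cite{owens}'' misdescribes that reference, whose Lemma 4.1 is proved by the $4$-dimensional argument above, not by diagrammatic matrix manipulation.
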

The proof of this proposition is based on a $4$-dimensional manifold bounded by the double branched cover $Y$ of the link $L$. This gives constraints on the linking form of $Y$, which in turn gives constraints on the determinant and signature of $L$. For details see \cite{owens}.

To illustrate the application of this method, let $L$ be the link $L10a54$, with $2$ components and determinant $78$. The Smith normal form of the Goeritz matrix $G$ of $L$ is
\[
S=\begin{bmatrix}
    1      & 0 & 0 & 0 \\
   0     &1 &0 & 0 \\
 0&0&1&0\\
    0    & 0 & 0 &78
\end{bmatrix},
\]
so that $G$ presents a finite cyclic group. The determinant of $L$ is neither a multiple of $4$, nor a multiple of $16$, nor twice the square of some $t\in\mathbb{Z}$, so that $u(L)\geq3$ by Proposition \ref{contr}. Therefore, the link has unlinking number $3$, as it can be converted to the trivial link with $2$ components by changing the $3$ crossings indicated in Figure \ref{fig:propo}.
\begin{figure}[h]
\centering
\includegraphics[scale=0.32]{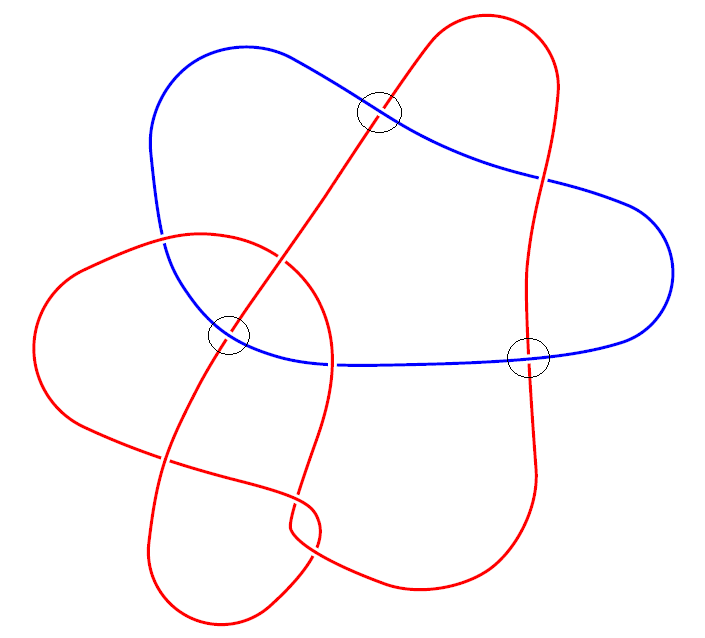}
\caption{One possible way to unlink $L10a54$}
\label{fig:propo}
\end{figure}

The following lemma can be viewed as a signed refinement of Proposition~\ref{curent}a.
\begin{lemma}\cite[Lemma 2.2]{owens}\label{lema2}
If an oriented link $L$ with $k$ components, signature $\sigma(L)$ and nullity $\eta(L)$ is converted to the trivial link by changing $p$ positive crossings and $n$ negative crossings in some diagram $D$ of the link, then
\begin{align*}
p&\geq\frac{-\sigma(L)-\eta(L)+k-1}{2}.
\end{align*}
\end{lemma}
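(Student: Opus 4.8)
The plan is to prove this by imitating the proofs of Propositions~\ref{semn} and \ref{curent}, but tracking the signature and the nullity of the link simultaneously and recording, at each crossing change, whether the crossing is positive or negative. Realise the given unlinking sequence as a chain $L = L_0, L_1, \dots, L_{p+n} = U$, where $U$ is the $k$-component trivial link and each $L_j$ is obtained from $L_{j-1}$ by changing a single crossing $c_j$ of the fixed oriented underlying diagram of $D$; we may assume the $c_j$ are distinct, so that the crossing changed at step $j$ still carries the sign $\epsilon(c_j)$ it has in $D$ (the earlier changes alter neither the orientation nor the over/under data at $c_j$). Since the links considered here are non-split, $D$ is connected, hence so is every diagram in the chain --- they all share the $4$-valent graph of $D$ --- so the nullity of each $L_j$ equals the nullity of the Goeritz matrix of any of its chessboard shadings, and its signature is given by the Gordon--Litherland formula~(\ref{ostea}).

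For a single step, proceed exactly as in the proof of Proposition~\ref{semn}: at step $j$ choose the chessboard shading for which $c_j$ has type~I and relabel the white regions so that $c_j$ is adjacent to $R_0$ and $R_n$. Then the Goeritz matrix of $L_j$ differs from that of $L_{j-1}$ only in the entry $g_{nn}$, which changes by $-2\iota(c_j)$, while $\mu$ is unchanged because the type of a crossing is independent of its over/under information and no other crossing is touched. The one delicate point is to verify, from the conventions of Figures~\ref{fig:orient} and \ref{fig:over}, that $\iota(c) = -\epsilon(c)$ at a type~I crossing --- equivalently, that changing a positive crossing increases $g_{nn}$ by $2$ and changing a negative crossing decreases it by $2$ --- which is readily checked on the Hopf link. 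Granting this, Lemma~\ref{prima}, applied with $A$ and $B$ interchanged in the negative case, shows that $\eta$ changes by at most $1$ and that, writing $\Delta\sigma = \sigma(L_j) - \sigma(L_{j-1})$ and $\Delta\eta = \eta(L_j) - \eta(L_{j-1})$,
\begin{equation*}
(\Delta\sigma, \Delta\eta) \in
\begin{cases}
\{(0,0),(2,0),(1,1),(1,-1)\} & \text{if $c_j$ is positive},\\
\{(0,0),(-2,0),(-1,1),(-1,-1)\} & \text{if $c_j$ is negative}.
\end{cases}
\end{equation*}
In every case $\Delta\sigma + \Delta\eta \le 2$ when $c_j$ is positive and $\Delta\sigma + \Delta\eta \le 0$ when $c_j$ is negative.

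It then remains to sum $\Delta\sigma + \Delta\eta$ over the $p + n$ steps. The sum telescopes to $(\sigma(U) - \sigma(L)) + (\eta(U) - \eta(L)) = -\sigma(L) - \eta(L) + (k - 1)$, using that $\sigma(U) = 0$ and $\eta(U) = k - 1$, while the per-step bounds show the same sum is at most $2p$. Hence $-\sigma(L) - \eta(L) + k - 1 \le 2p$, which is the asserted inequality. I expect the main obstacle to be precisely this sign bookkeeping in the single-step analysis: getting the relation $\iota(c) = -\epsilon(c)$ at type~I crossings right, so that it really is $p$ --- and not $n$ --- that is bounded below; once that is settled the argument is an assembly of Lemma~\ref{prima} with facts already established in the proofs of Propositions~\ref{semn} and \ref{curent}.
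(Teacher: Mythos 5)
Your proposal is correct and follows essentially the same route as the paper's proof: arrange each crossing to be changed as a type~I double point so that only one diagonal Goeritz entry changes, apply Lemma~\ref{prima} to see that $\sigma+\eta$ increases by at most $2$ at a positive crossing change and does not increase at a negative one, and telescope against the value $k-1$ for the trivial link. Your explicit case list of $(\Delta\sigma,\Delta\eta)$ and the remarks on connectivity and on the persistence of crossing signs along the sequence only make explicit what the paper leaves implicit.
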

\begin{proof}
Let $c$ be a positive crossing in the diagram of $L$, and choose the chessboard colouring of $D$ that makes $c$ a double point of type I. In this situation, $c$ has incidence number $\iota(c)=-1$. Let $G$ be the Goeritz matrix of the diagram and suppose we change the crossing $c$. As in the proof of Proposition~\ref{semn}, we are free to relabel the white regions, so that the new Goeritz matrix of the link is identical to the original one, except for one diagonal entry. After the change, $c$ is still a double point of type I, but its incidence number becomes $\iota(c)=1$. Therefore, the diagonal entry that distinguishes between the two Goeritz matrices increases. By Lemma~\ref{prima}, if the nullity of $G$ stays the same, then the signature of $G$ either stays the same or increases by $2$, and following (\ref{ostea}), so too does $\sigma(L)+\eta(L)$. If the nullity changes, it can only be by $1$, in which case Lemma~\ref{prima} tells us that the signature of $G$ increases by $1$, and consequently, $\sigma(L)+\eta(L)$ stays the same or increases by $2$. By a similar argument, changing a negative crossing causes $\sigma(L)+\eta(L)$ to either stay constant or decrease by $2$. As we have seen previously, the signature and nullity of the trivial link with $k$ components add up to $k-1$. The link $L$ is eventually converted to the trivial link, so that $\sigma(L)+\eta(L)$ increases by at most twice the number of positive crossings we change, giving $(k-1)-(\sigma(L)+\eta(L))\leq2p$, or equivalently,
\begin{equation*}
p\geq\frac{-\sigma(L)-\eta(L)+k-1}{2},
\end{equation*}
as required.
\end{proof} 
\subsection{Lattice embeddings}
Let the set of vectors $\{\textbf{a}_1,\dots,\textbf{a}_n\}$ form a basis for $\mathbb{R}^n$ over $\mathbb{R}$. These vectors span a \textit{lattice} $\Lambda$, which is the set of all linear combinations $\{m_1\textbf{a}_1+\dots+m_n\textbf{a}_n\}$ with $m_i\in\mathbb{Z}$, $i=1,\dots,n$. Let $\{\textbf{b}_1,\dots,\textbf{b}_k\}$ be a set of vectors in $\Lambda$. These vectors span a \textit{sublattice} $\Lambda_b\subset\Lambda$, which is the set of all linear combinations $\{n_1\textbf{b}_1+\dots+n_k\textbf{b}_k\}$ with $n_j\in\mathbb{Z}$, $j=1,\dots,k$. The sublattice $\Lambda_b$ of $\Lambda$ is called \textit{primitive} if for all $\textbf{v}\in\Lambda$ and for all $m\in\mathbb{N}$, if $m\textbf{v}\in\Lambda_b$ then $\textbf{v}\in\Lambda_b$. Nagel and Owens gave an obstruction to equality in the lower bound from Lemma \ref{lema2}, which we describe next. 
\begin{prop}\cite[Corollary 3]{owens}\label{gap}
Let $L$ be an oriented non-split alternating link, with $k$ components and signature $\sigma(L)$. Suppose $L$ can be converted to the trivial link by changing $p=\frac{-\sigma(L)+k-1}{2}$ positive crossings and $n$ negative crossings in some diagram of $L$. Let $m$ be the rank of the positive-definite Goeritz matrix $G$ associated to an alternating diagram of $L$, and define $l=m+2(n+p)-k+1$. Then $G$ admits a factorisation as $A^TA$, where $A$ is an integer $l\times m$ matrix. Moreover, there exist vectors \textbf{v}$_i$ for $i=1,\dots,p+n$ in $(\Col A)^{\perp}\subset \mathbb{Z}^l$ spanning a primitive sublattice of $\mathbb{Z}^l$, such that $\textbf{v}_i\cdot\textbf{v}_j=2\delta_{ij}$, where $\delta_{ij}$ is the Kronecker delta.
\end{prop}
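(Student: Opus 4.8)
The plan is to convert the unlinking sequence into a statement about closed, positive-definite smooth $4$-manifolds and then apply Donaldson's diagonalisation theorem. First I would pass to the double cover $Y$ of $S^3$ branched along $L$ (the manifold called $Y$ in the remarks after Proposition~\ref{contr}). Because $L$ is a non-split alternating link, a reduced alternating diagram carries a definite Goeritz form; taking the shading for which it is positive-definite, one obtains a compact smooth $4$-manifold $W_G$ with $\partial W_G = Y$, with $H_1(W_G;\mathbb{Z}) = 0$ and intersection form equal to $G$, so that $b_2(W_G) = m$. This is the $4$-dimensional incarnation of the Gordon--Litherland description of the signature used already in the proof of Proposition~\ref{semn}.

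Next I would realise the unlinking sequence as a cobordism over the branched cover. A single crossing change on $L$ is the effect of $(\mp 1)$-surgery on a small unknot $c$ bounding a disc that meets $L$ transversally in two points of opposite sign, the sign being dictated by whether the crossing is positive or negative. Lifting $c$ to $Y$ yields either a two-component unlink (when the two strands lie on one component) or a knot double-covering $c$ (when they lie on different components), and in each case the resulting operation on $Y$ is a surgery whose trace is built from handles. Stacking the traces of the $p+n$ surgeries coming from the unlinking sequence gives a cobordism $V$ from $Y$ to $\#^{k-1}(S^1\times S^2)$, the double branched cover of the $k$-component unlink; capping that end with $\natural^{k-1}(S^1\times D^3)$ and gluing $W_G$ to the other produces a closed $4$-manifold $X = W_G\cup_Y V \cup\,\natural^{k-1}(S^1\times D^3)$. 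Running the handle bookkeeping of Cochran--Lickorish and Owens--Strle, and using that the hypothesis $p = \tfrac{-\sigma(L)+k-1}{2}$ makes the bound of Lemma~\ref{lema2} an equality, one checks that the framings can be organised so that $X$ is positive-definite, $H_1(X;\mathbb{Z}) = 0$, and $b_2(X) = m + 2(p+n) - (k-1) = l$; moreover the handles associated to the $i$-th crossing change give a class $\mathbf{v}_i\in H_2(X;\mathbb{Z})$ with $\mathbf{v}_i\cdot\mathbf{v}_j = 2\delta_{ij}$, supported inside $V$ and hence orthogonal to the image of $H_2(W_G;\mathbb{Z})$; and the classes $\mathbf{v}_i$ form a basis of the image of $H_2(V;\mathbb{Z})$ in $H_2(X;\mathbb{Z})$.

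Now I would invoke Donaldson's theorem: the closed, smooth, positive-definite manifold $X$ has standard intersection form, so $H_2(X;\mathbb{Z})\cong\mathbb{Z}^l$ with the Euclidean pairing. Since $G$ is nondegenerate over $\mathbb{Q}$, the inclusion-induced map $H_2(W_G;\mathbb{Z})\to H_2(X;\mathbb{Z})$ is injective; writing it as an integer $l\times m$ matrix $A$ in the orthonormal basis, compatibility of intersection forms gives $A^TA = G$, and $\Col A$ is precisely the image of $H_2(W_G)$. The classes $\mathbf{v}_i$ then lie in $(\Col A)^\perp\subset\mathbb{Z}^l$ and satisfy $\mathbf{v}_i\cdot\mathbf{v}_j = 2\delta_{ij}$. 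Finally, for primitivity, excision and Lefschetz duality identify $H_2(X,V;\mathbb{Z})$ with $H_2(W_G,Y;\mathbb{Z})\cong H^2(W_G;\mathbb{Z})\cong\mathbb{Z}^m$ (the contribution of the cap vanishes), which is torsion-free; hence the image of $H_2(V;\mathbb{Z})\to H_2(X;\mathbb{Z})$ is a primitive (saturated) subgroup of $\mathbb{Z}^l$, and as the $\mathbf{v}_i$ form a basis of that image they span a primitive sublattice, as required.

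I expect the main obstacle to be the middle step: arranging the handle decomposition of $X$ so that it is positive-definite with exactly the asserted Betti number and with the crossing-change classes pairing as $2\delta_{ij}$. This is precisely where alternation (definiteness of $G$) and the sharp value of $p$ come in, and it requires careful control of the surgery coefficients in the branched cover, treating separately crossing changes between two strands of one component and crossing changes between two distinct components --- the latter being where the $-(k-1)$ correction to the rank appears --- along with a verification that $H_1(X;\mathbb{Z})$ vanishes once the cap is glued in. By comparison, the appeal to Donaldson's theorem and the homological primitivity argument should be routine.
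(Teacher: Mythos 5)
The paper does not actually prove this proposition: it is quoted from Nagel--Owens, and the text only records that the proof combines Gordon--Litherland, Donaldson's diagonalisation theorem, and a generalisation of Cochran--Lickorish. Your sketch assembles exactly these ingredients in the way the cited proof does --- the Goeritz $4$-manifold bounding the double branched cover, a cobordism built from the lifted crossing-change surgeries, capping off the cover of the unlink, Donaldson's theorem giving the embedding $A$ with $A^TA=G$, and primitivity from torsion-freeness of the relative homology --- so it is essentially the same approach, at the same level of detail one could reasonably expect without reproducing the reference. One local correction: the crossing-change circle $c$ always has even linking number with $L$ (its spanning disc meets $L$ in two points contributing $0$ or $\pm 2$), so its preimage in the double branched cover is always a two-component link, never a single circle double-covering $c$; the dichotomy you want --- and the actual source of the $-(k-1)$ in the rank count --- is whether the two strands belong to the same or to different components, which affects the homological bookkeeping of the lifted handles rather than the connectivity of the lift.
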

The proof of Proposition \ref{gap} uses results of Gordon and Litherland in \cite{litherland}, as well as the celebrated Diagonalisation Theorem of Donaldson in \cite{donaldson}, and is based on a generalisation of earlier work by Cochran and Lickorish in \cite{coch}. 

To illustrate the application of this method, let $L$ be the link $L10a138$, with $3$ components, determinant $48$ and nullity $0$. By part b) of Proposition~\ref{curent}, $u(L)>2$, and we aim to obstruct it from being $3$. When oriented as in Figure~\ref{fig:gap}, the link has signature $-4$. Suppose $L$ can be converted to the trivial link by changing $p$ positive crossings and $n$ negative crossings in some diagram. By Lemma~\ref{lema2}, $p\geq3$. Thus the only possibility if $u(L)=3$ is to have $p=3$ and $n=0$, which we will show cannot occur. Suppose $p=3$ and $n=0$. The positive-definite Goeritz matrix of the chosen alternating diagram is
\[
G=\begin{bmatrix}
    7      & -1 & -1 \\
   -1     &3 &-1 \\
 -1&-1&3\\
\end{bmatrix},
\]
which has rank $m=3$. Keeping the notation in Proposition~\ref{gap}, we have $l=7$. For any factorisation of $G$ as $A^TA$, where $A$ is a $7\times3$ integer matrix and $A^T$ is its transpose, another may be obtained by interchanging the second and third columns of $A$, permuting the rows of $A$, or multiplying a subset of the rows of $A$ by $-1$. Up to these symmetries, we are left with $9$ solutions, as follows:
\[
\begin{bmatrix}
-1&1&1\\
2&0&0\\
1&-1&1\\
-1&-1&1\\
0&0&0\\
0&0&0\\
0&0&0
\end{bmatrix},
\begin{bmatrix}
-1&1&1\\
2&0&0\\
0&-1&1\\
0&-1&1\\
1&0&0\\
1&0&0\\
0&0&0
\end{bmatrix},
\begin{bmatrix}
1&1&0\\
1&-1&1\\
-1&1&0\\
-1&0&1\\
-1&0&1\\
1&0&0\\
1&0&0
\end{bmatrix},
\begin{bmatrix}
-1&1&1\\
1&-1&1\\
-1&-1&1\\
1&0&0\\
1&0&0\\
1&0&0\\
1&0&0
\end{bmatrix},
\begin{bmatrix}
-2&1&0\\
1&0&1\\
-1&-1&1\\
-1&0&1\\
0&1&0\\
0&0&0\\
0&0&0
\end{bmatrix},
\]
\[
\begin{bmatrix}
-2&1&0\\
1&1&0\\
0&-1&1\\
-1&0&1\\
0&0&1\\
1&0&0\\
0&0&0
\end{bmatrix},
\begin{bmatrix}
2&0&0\\
1&-1&1\\
-1&0&1\\
-1&0&1\\
0&1&0\\
0&1&0\\
0&0&0
\end{bmatrix},
\begin{bmatrix}
-2&1&0\\
-1&-1&1\\
0&1&0\\
0&0&1\\
0&0&1\\
1&0&0\\
1&0&0
\end{bmatrix},
\begin{bmatrix}
2&0&0\\
0&-1&1\\
-1&1&0\\
-1&0&1\\
0&1&0\\
0&0&1\\
1&0&0
\end{bmatrix}.
\]
It is straightforward to check that for any matrix $A$ in this list, there does not exist a set of vectors $\{\textbf{v}_1,\textbf{v}_2,\textbf{v}_3\}$ in the orthogonal complement of the column space of $A$, such that $\textbf{v}_i\cdot\textbf{v}_j=2\delta_{ij}$, so that $u(L)\geq4$ by Proposition \ref{gap}. Therefore, the link has unlinking number $4$, as it can be converted to the trivial link with $3$ components by changing the $4$ crossings indicated in Figure~\ref{fig:gap}.

In general, the method based on Proposition~\ref{gap} gives a somewhat involved algorithm to obstruct equality in Lemma \ref{lema2}, leading to improved lower bounds on unlinking number. All possible factorisations of the Goeritz matrix can be found by hand, but this can also be done using the command $OrthogonalEmbeddings$ provided by GAP \cite{gap}. 
\begin{figure}[h]
\centering
\includegraphics[scale=0.23]{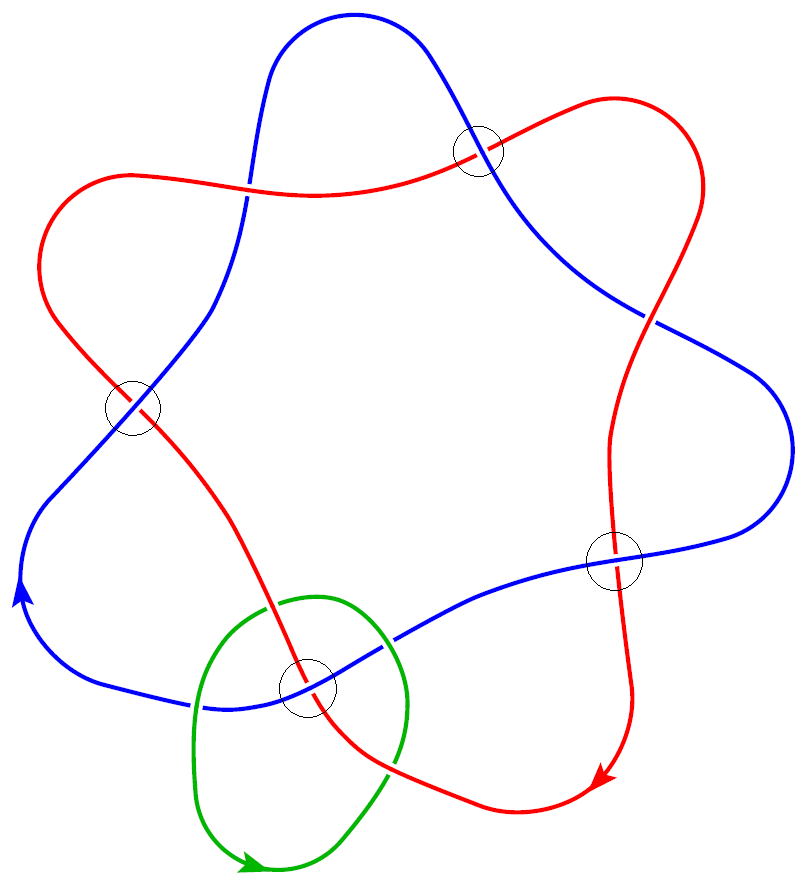}
\caption{One possible way to unlink $L10a138$}
\label{fig:gap}
\end{figure}

So far, five methods that give lower bounds on the unlinking number of a link --- alone or combined --- have been described in Propositions \ref{primpropo}, \ref{semn}, \ref{curent}, \ref{contr} and \ref{gap}. The next method was developed by Kohn in \cite{kohn}. 
\subsection{Covering links}
Let $p:\mathbb{C}\times\mathbb{R}\rightarrow\mathbb{C}\times\mathbb{R}$ be the map taking $(z,t)$ to $(z^2,t)$. Let $L$ be a link with $2$ components, say $L=A\sqcup B$, where $A$ is the trivial knot and $lk(A,B)=0$. Assume, after isotopy in $S^3=\mathbb{R}^3\cup{\{\infty\}}=\mathbb{C}\times\mathbb{R}\cup{\{\infty\}}$, that $A$ is $0\times\mathbb{R}$, and let $\tilde{B}$ be the preimage of $B$ under $p$. We refer to $\tilde{B}$ as the \textit{covering link} of $B$ under $p$.
\begin{prop}\cite[Method 5]{kohn}\label{prop6}
Let $L$ be a link with $2$ components, say $A$ and $B$, such that $A$ is the trivial knot and $lk(A,B)=0$. If $L$ is unlinked by a single crossing change involving $B$ only, then the unlinking number of $\tilde{B}$ is at most $2$.
\end{prop}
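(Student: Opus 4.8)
The plan is to turn the given unlinking datum for $L$ directly into a two-crossing-change unlinking of $\tilde{B}$. First some preliminaries. Since $A$ is the trivial knot, the map $p$ extends to the two-fold cover of $S^3=(\mathbb{C}\times\mathbb{R})\cup\{\infty\}$ branched along $A=(0\times\mathbb{R})\cup\{\infty\}$, whose total space is again $S^3$, so $\tilde{B}=p^{-1}(B)$ is a link in $S^3$. Since $lk(A,B)=0$, the knot $B$ is nullhomologous in $S^3\setminus A$ and hence lifts to the double cover, so $\tilde{B}$ has exactly two components, each mapped homeomorphically onto $B$ by $p$; thus $u(\tilde{B})$ is the unlinking number of a genuine two-component link. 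It therefore suffices to exhibit a diagram of $\tilde{B}$ together with two crossings in it whose simultaneous change yields the two-component trivial link.

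The core step is to lift the crossing change. By hypothesis there is a diagram of $L$ in which changing a single self-crossing $c$ of $B$ turns $L$ into the two-component unlink $A\sqcup B'$ with $B'$ an unknot. Because $A\cap B=\emptyset$, we may take the crossing $c$ --- and hence the local crossing-change move --- to be supported in a ball $D$ disjoint from the branch locus $A$, with $D\cap B$ a pair of arcs forming the crossing. As $D$ misses the branch locus, $p^{-1}(D)=D_1\sqcup D_2$ is a disjoint union of two balls, each carried homeomorphically onto $D$ by $p$; consequently each $D_i$ contains a single crossing $c_i$ of $\tilde{B}$, a lift of $c$. Changing both $c_1$ and $c_2$ performs the crossing change inside each $D_i$; since $p^{-1}(\cdot)$ is compatible with this cut-and-paste, the result of the two changes is exactly $p^{-1}(B')$. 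Moreover $D_1$ and $D_2$ are disjoint, so $c_1$ and $c_2$ appear together in a single diagram of $\tilde{B}$.

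It remains to check that $p^{-1}(B')$ is trivial. Since $A\sqcup B'$ is the two-component unlink, the unknot $B'$ bounds an embedded disk $\Delta\subset S^3\setminus A$. The disk $\Delta$ is disjoint from the branch locus and simply connected, so $p^{-1}(\Delta)$ is a trivial double cover of $\Delta$, that is, a pair of disjoint embedded disks; its boundary $p^{-1}(B')$ is therefore the two-component trivial link. Combining this with the previous paragraph, two crossing changes displayed in a single diagram of $\tilde{B}$ carry $\tilde{B}$ to the two-component trivial link, so $u(\tilde{B})\leq 2$.

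The part requiring the most care is the correspondence between one crossing change downstairs and two crossing changes upstairs. The points to pin down are: that the crossing ball $D$ really can be taken disjoint from the branch axis (this uses $A\cap B=\emptyset$); that its preimage is two disjoint balls, each mapped homeomorphically (because $D$ is simply connected); that $p^{-1}(\cdot)$ commutes with the cut-and-paste defining a crossing change, so the modified lift is exactly $p^{-1}(B')$; and that the two lifted crossings sit in one diagram of $\tilde{B}$, so that ``$2$'' bounds $u(\tilde{B})$ and not merely some auxiliary crossing-change distance. By contrast, the hypothesis $lk(A,B)=0$ is only needed to ensure that $\tilde{B}$ is a two-component link, and identifying $p^{-1}(B')$ with the unlink is routine once the spanning disk $\Delta$ has been lifted.
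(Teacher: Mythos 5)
Your proof is correct and follows essentially the same route as the paper's sketch: isolate the unlinking crossing of $B$ away from the axis $A$, observe that it lifts to two crossings of $\tilde{B}$ under the double cover, and identify the lift of the resulting unknot with the two-component unlink. Your justification of the last step by lifting a spanning disk of $B'$ disjoint from $A$ is a slightly more careful version of the paper's remark that the preimage of a circle not containing the origin is a pair of circles, but the argument is the same.
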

\begin{hproof}
Suppose $L$ can be converted to the trivial link by changing a single crossing $c$, with both strands of $c$ belonging to component $B$. We may isotope $B$ so that it lies near the plane $\mathbb{C}\times\{0\}$ and its projection onto this plane contains the unlinking crossing $c$. The preimage $\tilde{B}$ of $B$ will then contain two crossings $c_1$ and $c_2$, which are the preimage of $c$ under $p$. Changing $c$ converts $L$ to the unlink, therefore changing $c_1$ and $c_2$ must convert $\tilde{B}$ to the unlink, since the preimage under $p$ of a circle in $\mathbb{C}\times\{0\}$ not containing the origin is a pair of circles.
\end{hproof}
To illustrate the application of this method, let $L$ be the link $L10a7$ shown in Figure~\ref{fig:incauna}. The link has $2$ components, namely the red trivial knot $A$ and the blue figure-eight knot $B$, with $lk(A,B)=0$. If $L$ can be converted to the unlink by changing a single crossing, then both strands must belong to the knotted component $B$. So suppose that $L$ is converted to the unlink by a single crossing change involving $B$ only. After isotopy, assume that $A$ is $0\times\mathbb{R}$, as depicted in Figure~\ref{fig:poza}. 
\begin{figure}[t]
\centering
\includegraphics[scale=0.63]{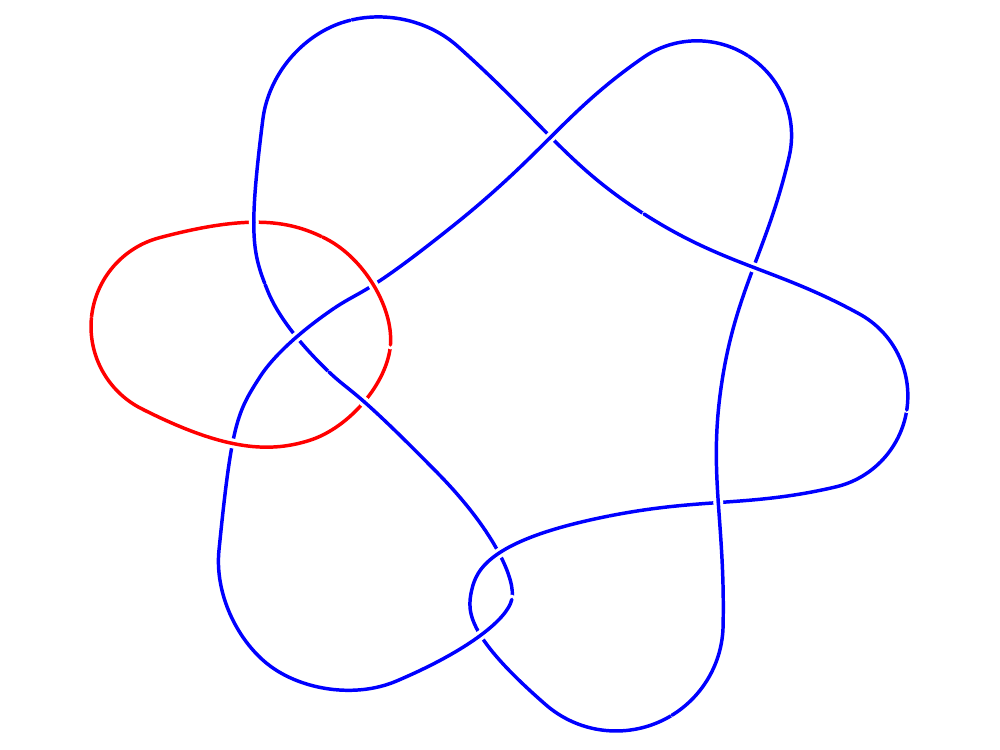}
\caption{Diagram of $L10a7$}
\label{fig:incauna}
\end{figure}
\begin{figure}[H]
\centering
\includegraphics[scale=0.3]{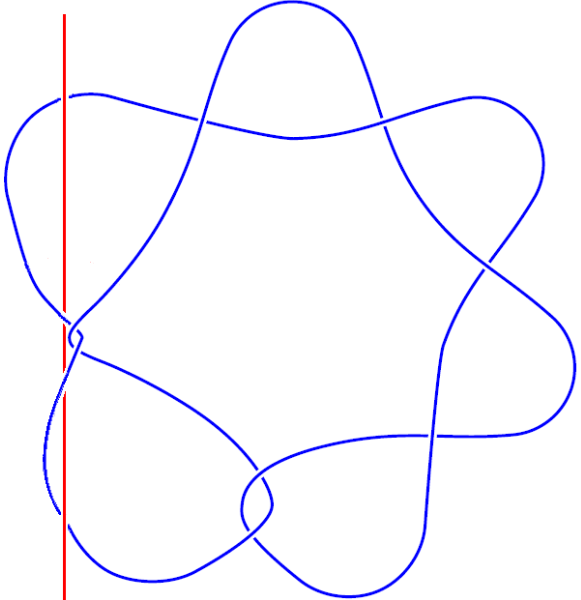}
\caption{$L10a7$ when the trivial component is $0\times\mathbb{R}$}
\label{fig:poza}
\end{figure}

The preimage of $B$ under the map $p$ is the union of $B$ and its rotated image, glued together to form the covering link $\tilde{B}$, as in Figure~\ref{finala}. It consists of two Stevedore knots --- each with unknotting number $1$ --- with linking number $2$ when oriented as shown.
\begin{figure}[h]
\centering
\includegraphics[scale=0.33]{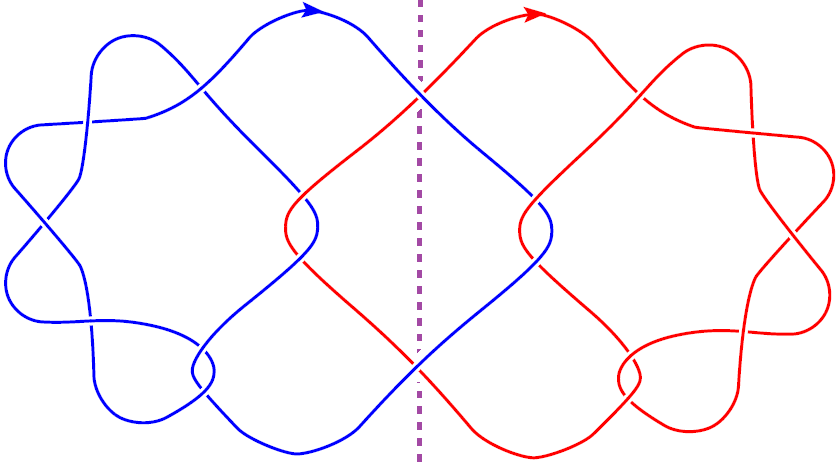}
\caption{Diagram of $\tilde{B}$}
\label{finala}
\end{figure}

Following Proposition~\ref{primpropo}, $u(\tilde{B})\geq4$, contradicting Proposition \ref{prop6}. Therefore, $u(L)\geq2$, and $L$ has unlinking number $2$, as it can be converted to the trivial link with $2$ components by changing the $2$ crossings indicated in Figure~\ref{fig:ciudat}.
\begin{figure}[h]
\centering
\includegraphics[scale=0.29]{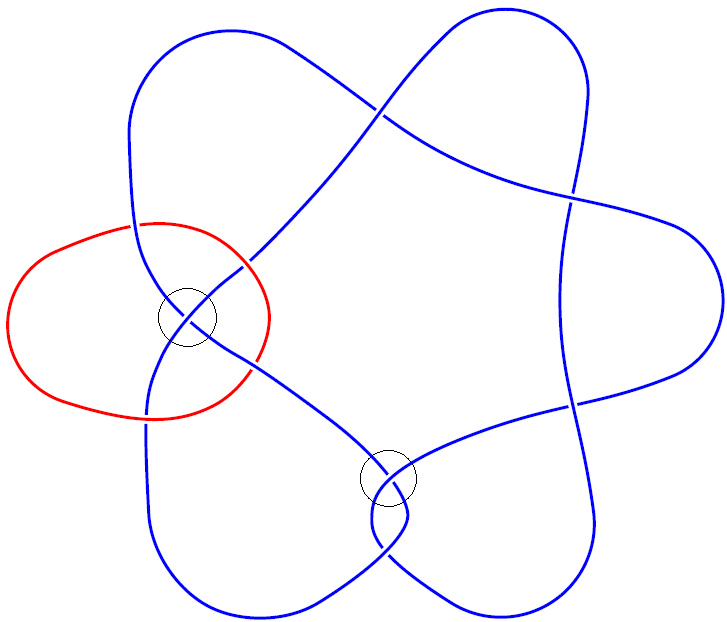}
\caption{One possible way to unlink $L10a7$}
\label{fig:ciudat}
\end{figure}
\section{Table of unlinking numbers}\label{table}
Table \ref{name} contains all prime, non-split links with crossing number $10$ and at least $2$ components, together with the unlinking number $u(L)$ of each link and a proposition that gives a lower bound that realises $u(L)$. With the exception of $L10n32$ and $L10n34$, the table is complete. 
\subsection{Unknown cases}
Although the methods in this paper were not sufficient to determine the unlinking numbers of $2$ of the links in the table, they still provide partial information. In the following, $p$ is the number of positive crossings and $n$ is the number of negative crossings that we change. 
\begin{itemize}
\item $L10n32$ has $u(L)\geq1$ and we conjecture that $u(L)=2$; 
\item $L10n34$ has $u(L)\geq2$ by Proposition~\ref{curent} and we conjecture that $u(L)=3$; the cases $p=0$, $n=2$ and $p=1$, $n=1$ for any choice of orientation are obstructed by Lemma \ref{lema2} and Proposition~\ref{gap}, respectively.
\end{itemize}
\clearpage
\begin{figure}[h]
\centering
\labellist
\small\hair 2pt
\pinlabel $L10n32$ at 160 10
\pinlabel $L10n34$ at 620 10
\endlabellist
\includegraphics[scale=0.4]{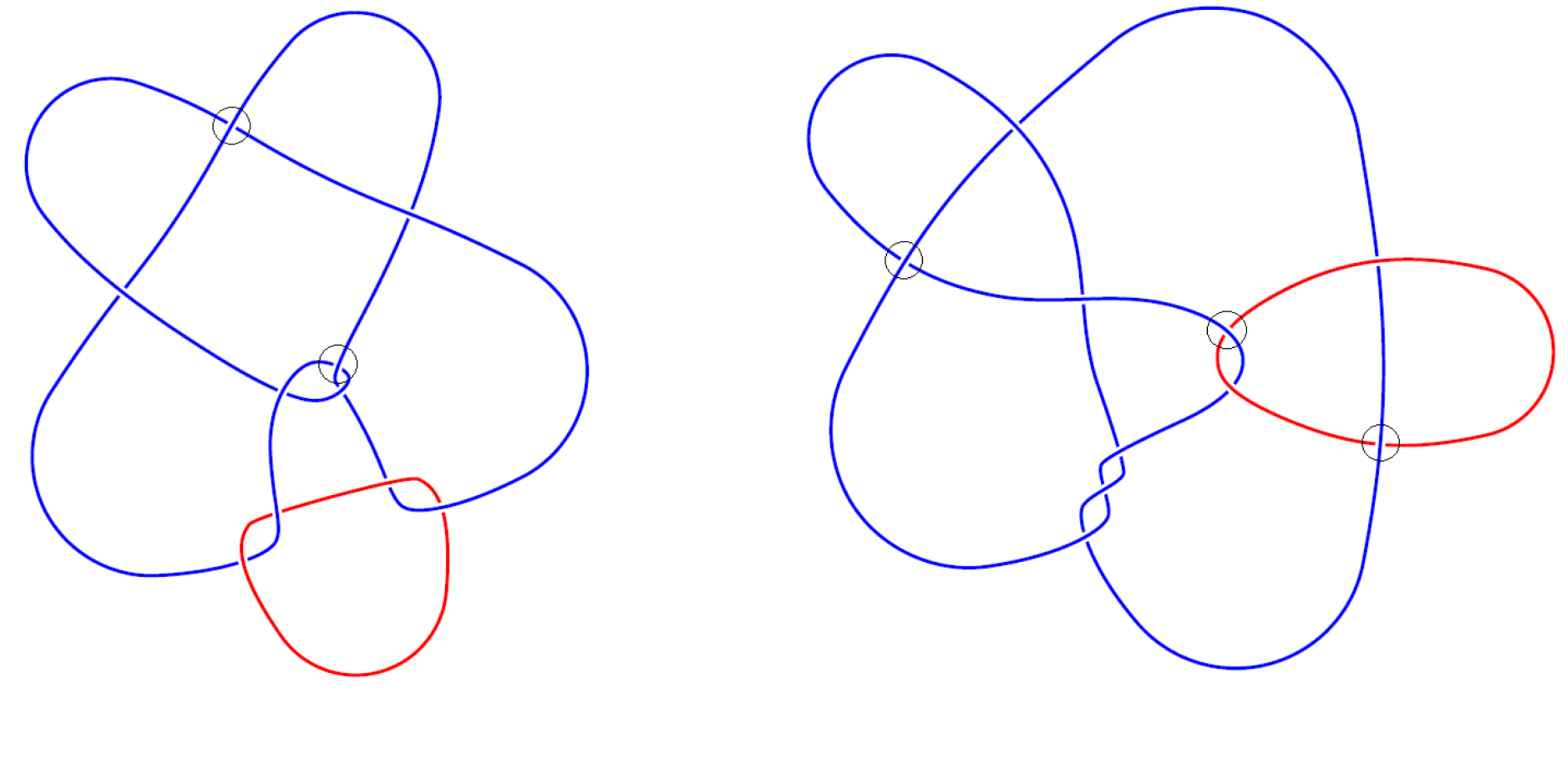}
\caption{Showing a set of crossing changes that unlink the two remaining links}
\label{fig:tot}
\end{figure}
\nocite{*}
\begin{table}[h]
\begin{center}
\small
\begin{tabular}[htb]{l c l}
\toprule
Link $L$ & $u(L)$ &Method \\
\midrule
$L10a1$ & $2$ & Prop \ref{curent}b\\
$L10a2$ & $2$ & Prop \ref{semn}\\
$L10a3$ & $2$ & Prop \ref{prop6} \& \ref{primpropo}\\
$L10a4$ & $2$ & Prop \ref{curent}b\\
$L10a5$ & $2$ & Prop \ref{curent}b\\
$L10a6$ & $2$ & Prop \ref{curent}b\\
$L10a7$ & $2$ & Prop \ref{prop6} \& \ref{primpropo}\\
$L10a8$ & $3$ & Prop \ref{gap}\\
$L10a9$ & $2$ & Prop \ref{semn}\\
$L10a10$ & $2$ & Prop \ref{curent}b\\
$L10a11$ & $3$ & Prop \ref{primpropo}\\
$L10a12$ & $3$ & Prop \ref{primpropo}\\
$L10a13$ & $3$ & Prop \ref{primpropo}\\
$L10a14$ & $2$ & Prop \ref{curent}b\\
$L10a15$ & $3$ & Prop \ref{primpropo}\\
$L10a16$ & $3$ & Prop \ref{primpropo}\\
$L10a17$ & $3$ & Prop \ref{gap}\\
$L10a18$ & $2$ & Prop \ref{primpropo}\\
$L10a19$ & $2$ & Prop \ref{curent}b\\
$L10a20$ & $2$ & Prop \ref{curent}b\\
$L10a21$ & $2$ & Prop \ref{curent}b\\
$L10a22$ & $2$ & Prop \ref{semn}\\
$L10a23$ & $3$ & Prop \ref{gap}\\
$L10a24$ & $3$ & Prop \ref{gap}\\
$L10a25$ & $3$ & Prop \ref{primpropo}\\
$L10a26$ & $3$ & Prop \ref{primpropo}\\
$L10a27$ & $2$ & Prop \ref{semn}\\
$L10a28$ & $1$ & Prop \ref{primpropo}\\
$L10a29$ & $1$ & Prop \ref{primpropo}\\
$L10a30$ & $3$ & Prop \ref{primpropo}\\
$L10a31$ & $2$ & Prop \ref{curent}b\\
$L10a32$ & $2$ & Prop \ref{prop6} \& \ref{gap}\\
$L10a33$ & $3$ & Prop \ref{primpropo}\\
$L10a34$ & $1$ & Prop \ref{primpropo}\\
$L10a35$ & $2$ & Prop \ref{primpropo}\\
$L10a36$ & $1$ & Prop \ref{primpropo}\\
$L10a37$ & $3$ & Prop \ref{primpropo}\\
$L10a38$ & $4$ & Prop \ref{primpropo}\\
$L10a39$ & $2$ & Prop \ref{primpropo}\\
$L10a40$ & $3$ & Prop \ref{primpropo}\\
$L10a41$ & $2$ & Prop \ref{curent}b\\
$L10a42$ & $2$ & Prop \ref{primpropo}\\
$L10a43$ & $4$ & Prop \ref{primpropo}\\
$L10a44$ & $4$ & Prop \ref{primpropo}\\
$L10a45$ & $3$ & Prop \ref{primpropo}\\
$L10a46$ & $4$ & Prop \ref{primpropo}\\
$L10a47$ & $3$ & Prop \ref{primpropo}\\
$L10a48$ & $2$ & Prop \ref{primpropo}\\
\bottomrule
\end{tabular}
\qquad
\begin{tabular}[tbp]{l c l}
\toprule
Link $L$ & $u(L)$ &Method \\
\midrule
$L10a49$ & $4$ & Prop \ref{primpropo}\\
$L10a50$ & $3$ & Prop \ref{primpropo}\\
$L10a51$ & $1$ & Prop \ref{primpropo}\\
$L10a52$ & $2$ & Prop \ref{semn}\\
$L10a53$ & $1$ & Prop \ref{primpropo}\\
$L10a54$ & $3$ & Prop \ref{contr}\\
$L10a55$ & $2$ & Prop \ref{curent}b\\
$L10a56$ & $2$ & Prop \ref{curent}b\\
$L10a57$ & $2$ & Prop \ref{primpropo}\\
$L10a58$ & $4$ & Prop \ref{primpropo}\\
$L10a59$ & $2$ & Prop \ref{primpropo}\\
$L10a60$ & $2$ & Prop \ref{primpropo}\\
$L10a61$ & $2$ & Prop \ref{primpropo}\\
$L10a62$ & $3$ & Prop \ref{semn}\\
$L10a63$ & $3$ & Prop \ref{contr}\\
$L10a64$ & $2$ & Prop \ref{semn}\\
$L10a65$ & $2$ & Prop \ref{curent}b\\
$L10a66$ & $2$ & Prop \ref{primpropo}\\
$L10a67$ & $4$ & Prop \ref{primpropo}\\
$L10a68$ & $2$ & Prop \ref{primpropo}\\
$L10a69$ & $2$ & Prop \ref{primpropo}\\
$L10a70$ & $2$ & Prop \ref{semn}\\
$L10a71$ & $2$ & Prop \ref{curent}b\\
$L10a72$ & $4$ & Prop \ref{primpropo}\\
$L10a73$ & $3$ & Prop \ref{primpropo}\\
$L10a74$ & $4$ & Prop \ref{primpropo}\\
$L10a75$ & $3$ & Prop \ref{primpropo}\\
$L10a76$ & $2$ & Prop \ref{primpropo}\\
$L10a77$ & $4$ & Prop \ref{primpropo}\\
$L10a78$ & $4$ & Prop \ref{primpropo}\\
$L10a79$ & $2$ & Prop \ref{primpropo}\\
$L10a80$ & $2$ & Prop \ref{primpropo}\\
$L10a81$ & $4$ & Prop \ref{primpropo}\\
$L10a82$ & $3$ & Prop \ref{contr}\\
$L10a83$ & $3$ & Prop \ref{primpropo}\\
$L10a84$ & $2$ & Prop \ref{primpropo}\\
$L10a85$ & $4$ & Prop \ref{primpropo}\\
$L10a86$ & $2$ & Prop \ref{primpropo}\\
$L10a87$ & $3$ & Prop \ref{primpropo}\\
$L10a88$ & $2$ & Prop \ref{primpropo}\\
$L10a89$ & $2$ & Prop \ref{semn}\\
$L10a90$ & $2$ & Prop \ref{curent}b\\
$L10a91$ & $2$ & Prop \ref{curent}b\\
$L10a92$ & $2$ & Prop \ref{primpropo}\\
$L10a93$ & $3$ & Prop \ref{gap}\\
$L10a94$ & $4$ & Prop \ref{primpropo}\\
$L10a95$ & $1$ & Prop \ref{primpropo}\\
$L10a96$ & $4$ & Prop \ref{primpropo}\\
\bottomrule
\end{tabular}
\qquad
\begin{tabular}[tbp]{l c l}
\toprule
Link $L$ & $u(L)$ &Method \\
\midrule
$L10a97$ & $4$ & Prop \ref{primpropo}\\
$L10a98$ & $4$ & Prop \ref{primpropo}\\
$L10a99$ & $3$ & Prop \ref{semn}\\
$L10a100$ & $4$ & Prop \ref{primpropo}\\
$L10a101$ & $4$ & Prop \ref{primpropo}\\
$L10a102$ & $4$ & Prop \ref{primpropo}\\
$L10a103$ & $1$ & Prop \ref{semn}\\
$L10a104$ & $2$ & Prop \ref{primpropo}\\
$L10a105$ & $4$ & Prop \ref{primpropo}\\
$L10a106$ & $3$ & Prop \ref{gap}\\
$L10a107$ & $4$ & Prop \ref{primpropo}\\
$L10a108$ & $4$ & Prop \ref{primpropo}\\
$L10a109$ & $2$ & Prop \ref{primpropo}\\
$L10a110$ & $4$ & Prop \ref{primpropo}\\
$L10a111$ & $2$ & Prop \ref{curent}b\\
$L10a112$ & $2$ & Prop \ref{curent}b\\
$L10a113$ & $3$ & Prop \ref{gap}\\
$L10a114$ & $5$ & Prop \ref{primpropo}\\
$L10a115$ & $5$ & Prop \ref{primpropo}\\
$L10a116$ & $5$ & Prop \ref{primpropo}\\
$L10a117$ & $5$ & Prop \ref{primpropo}\\
$L10a118$ & $5$ & Prop \ref{primpropo}\\
$L10a119$ & $5$ & Prop \ref{primpropo}\\
$L10a120$ & $5$ & Prop \ref{primpropo}\\
$L10a121$ & $5$ & Prop \ref{primpropo}\\
$L10a122$ & $4$ & Prop \ref{primpropo}\\
$L10a123$ & $4$ & Prop \ref{primpropo}\\
$L10a124$ & $4$ & Prop \ref{primpropo}\\
$L10a125$ & $4$ & Prop \ref{primpropo}\\
$L10a126$ & $3$ & Prop \ref{primpropo}\\
$L10a127$ & $3$ & Prop \ref{curent}b\\
$L10a128$ & $3$ & Prop \ref{primpropo}\\
$L10a129$ & $3$ & Prop \ref{primpropo}\\
$L10a130$ & $4$ & Prop \ref{primpropo}\\
$L10a131$ & $4$ & Prop \ref{primpropo}\\
$L10a132$ & $4$ & Prop \ref{primpropo}\\
$L10a133$ & $4$ & Prop \ref{primpropo}\\
$L10a134$ & $4$ & Prop \ref{primpropo}\\
$L10a135$ & $3$ & Prop \ref{primpropo}\\
$L10a136$ & $2$ & Prop \ref{primpropo}\\
$L10a137$ & $4$ & Prop \ref{gap}\\
$L10a138$ & $4$ & Prop \ref{gap}\\
$L10a139$ & $4$ & Prop \ref{primpropo}\\
$L10a140$ & $2$ & Prop \ref{curent}a\\
$L10a141$ & $3$ & Prop \ref{gap}\\
$L10a142$ & $5$ & Prop \ref{primpropo}\\
$L10a143$ & $5$ & Prop \ref{primpropo}\\
$L10a144$ & $5$ & Prop \ref{primpropo}\\
\bottomrule
\end{tabular}
\caption{Unlinking numbers of prime, non-split links with crossing number $10$}
\label{name}
\end{center}
\end{table}

\begin{table}
\renewcommand\thetable{1}
\begin{center}
\small
\begin{tabular}[tbp]{l c l}
\toprule
Link $L$ & $u(L)$ &Method \\
\midrule
$L10a145$ & $5$ & Prop \ref{primpropo}\\
$L10a146$ & $5$ & Prop \ref{primpropo}\\
$L10a147$ & $3$ & Prop \ref{primpropo}\\
$L10a148$ & $3$ & Prop \ref{primpropo}\\
$L10a149$ & $3$ & Prop \ref{primpropo}\\
$L10a150$ & $3$ & Prop \ref{primpropo}\\
$L10a151$ & $3$ & Prop \ref{curent}b\\
$L10a152$ & $5$ & Prop \ref{primpropo}\\
$L10a153$ & $5$ & Prop \ref{primpropo}\\
$L10a154$ & $4$ & Prop \ref{primpropo}\\
$L10a155$ & $4$ & Prop \ref{primpropo}\\
$L10a156$ & $2$ & Prop \ref{primpropo}\\
$L10a157$ & $4$ & Prop \ref{gap}\\
$L10a158$ & $4$ & Prop \ref{gap}\\
$L10a159$ & $5$ & Prop \ref{primpropo}\\
$L10a160$ & $5$ & Prop \ref{primpropo}\\
$L10a161$ & $5$ & Prop \ref{primpropo}\\
$L10a162$ & $3$ & Prop \ref{primpropo}\\
$L10a163$ & $3$ & Prop \ref{curent}b\\
$L10a164$ & $5$ & Prop \ref{primpropo}\\
$L10a165$ & $4$ & Prop \ref{primpropo}\\
$L10a166$ & $5$ & Prop \ref{primpropo}\\
$L10a167$ & $5$ & Prop \ref{primpropo}\\
$L10a168$ & $5$ & Prop \ref{primpropo}\\
$L10a169$ & $3$ & Prop \ref{semn}\\
$L10a170$ & $4$ & Prop \ref{primpropo}\\
$L10a171$ & $5$ & Prop \ref{primpropo}\\
$L10a172$ & $5$ & Prop \ref{primpropo}\\
$L10a173$ & $5$ & Prop \ref{primpropo}\\
$L10a174$ & $5$ & Prop \ref{primpropo}\\
$L10n1$ & $3$ & Prop \ref{primpropo}\\
$L10n2$ & $1$ & Prop \ref{primpropo}\\
$L10n3$ & $2$ & Prop \ref{curent}b\\
$L10n4$ & $3$ & Prop \ref{primpropo}\\
$L10n5$ & $2$ & Prop \ref{semn}\\
$L10n6$ & $2$ & Prop \ref{curent}b\\
$L10n7$ & $3$ & Prop \ref{primpropo}\\
$L10n8$ & $2$ & Prop \ref{curent}b\\
$L10n9$ & $1$ & Prop \ref{primpropo}\\
$L10n10$ & $3$ & Prop \ref{primpropo}\\
$L10n11$ & $1$ & Prop \ref{primpropo}\\
$L10n12$ & $2$ & Prop \ref{curent}b\\
$L10n13$ & $3$ & Prop \ref{primpropo}\\
$L10n14$ & $1$ & Prop \ref{primpropo}\\
$L10n15$ & $3$ & Prop \ref{contr}\\
$L10n16$ & $2$ & Prop \ref{primpropo}\\
$L10n17$ & $3$ & Prop \ref{primpropo}\\
$L10n18$ & $1$ & Prop \ref{primpropo}\\
\bottomrule
\end{tabular}
\qquad
\begin{tabular}[tbp]{l c l}
\toprule
Link $L$ & $u(L)$ &Method \\
\midrule
$L10n19$ & $3$ & Prop \ref{primpropo}\\
$L10n20$ & $2$ & Prop \ref{curent}b\\
$L10n21$ & $1$ & Prop \ref{semn}\\
$L10n22$ & $1$ & Prop \ref{primpropo}\\
$L10n23$ & $3$ & Prop \ref{semn}\\
$L10n24$ & $2$ & Prop \ref{curent}b\\
$L10n25$ & $3$ & Prop \ref{primpropo}\\
$L10n26$ & $2$ & Prop \ref{primpropo}\\
$L10n27$ & $2$ & Prop \ref{primpropo}\\
$L10n28$ & $3$ & Prop \ref{primpropo}\\
$L10n29$ & $3$ & Prop \ref{primpropo}\\
$L10n30$ & $3$ & Prop \ref{primpropo}\\
$L10n31$ & $3$ & Prop \ref{primpropo}\\
$L10n32$ & $[1,2]$ & $-$\\
$L10n33$ & $2$ & Prop \ref{curent}b\\
$L10n34$ & $[2,3]$ & Prop \ref{curent}b\\
$L10n35$ & $2$ & Prop \ref{primpropo}\\
$L10n36$ & $2$ & Prop \ref{primpropo}\\
$L10n37$ & $4$ & Prop \ref{primpropo}\\
$L10n38$ & $4$ & Prop \ref{primpropo}\\
$L10n39$ & $3$ & Prop \ref{semn}\\
$L10n40$ & $2$ & Prop \ref{primpropo}\\
$L10n41$ & $2$ & Prop \ref{curent}b\\
$L10n42$ & $3$ & Prop \ref{semn}\\
$L10n43$ & $2$ & Prop \ref{primpropo}\\
$L10n44$ & $1$ & Prop \ref{primpropo}\\
$L10n45$ & $2$ & Prop \ref{primpropo}\\
$L10n46$ & $4$ & Prop \ref{primpropo}\\
$L10n47$ & $4$ & Prop \ref{primpropo}\\
$L10n48$ & $2$ & Prop \ref{primpropo}\\
$L10n49$ & $4$ & Prop \ref{primpropo}\\
$L10n50$ & $3$ & Prop \ref{contr}\\
$L10n51$ & $4$ & Prop \ref{primpropo}\\
$L10n52$ & $2$ & Prop \ref{primpropo}\\
$L10n53$ & $3$ & Prop \ref{primpropo}\\
$L10n54$ & $3$ & Prop \ref{semn}\\
$L10n55$ & $4$ & Prop \ref{primpropo}\\
$L10n56$ & $1$ & Prop \ref{semn}\\
$L10n57$ & $1$ & Prop \ref{curent}a\\
$L10n58$ & $2$ & Prop \ref{primpropo}\\
$L10n59$ & $2$ & Prop \ref{primpropo}\\
$L10n60$ & $4$ & Prop \ref{primpropo}\\
$L10n61$ & $4$ & Prop \ref{primpropo}\\
$L10n62$ & $3$ & Prop \ref{semn}\\
$L10n63$ & $3$ & Prop \ref{gap}\\
$L10n64$ & $2$ & Prop \ref{semn}\\
$L10n65$ & $4$ & Prop \ref{primpropo}\\
$L10n66$ & $4$ & Prop \ref{primpropo}\\
\bottomrule
\end{tabular}
\qquad
\begin{tabular}[tbp]{l c l}
\toprule
Link $L$ & $u(L)$ &Method \\
\midrule
$L10n67$ & $4$ & Prop \ref{primpropo}\\
$L10n68$ & $4$ & Prop \ref{primpropo}\\
$L10n69$ & $4$ & Prop \ref{primpropo}\\
$L10n70$ & $2$ & Prop \ref{primpropo}\\
$L10n71$ & $4$ & Prop \ref{primpropo}\\
$L10n72$ & $4$ & Prop \ref{primpropo}\\
$L10n73$ & $2$ & Prop \ref{primpropo}\\
$L10n74$ & $5$ & Prop \ref{primpropo}\\
$L10n75$ & $5$ & Prop \ref{primpropo}\\
$L10n76$ & $3$ & Prop \ref{primpropo}\\
$L10n77$ & $5$ & Prop \ref{primpropo}\\
$L10n78$ & $5$ & Prop \ref{primpropo}\\
$L10n79$ & $3$ & Prop \ref{primpropo}\\
$L10n80$ & $4$ & Prop \ref{primpropo}\\
$L10n81$ & $5$ & Prop \ref{primpropo}\\
$L10n82$ & $5$ & Prop \ref{primpropo}\\
$L10n83$ & $3$ & Prop \ref{primpropo}\\
$L10n84$ & $5$ & Prop \ref{primpropo}\\
$L10n85$ & $3$ & Prop \ref{primpropo}\\
$L10n86$ & $3$ & Prop \ref{primpropo}\\
$L10n87$ & $5$ & Prop \ref{primpropo}\\
$L10n88$ & $4$ & Prop \ref{primpropo}\\
$L10n89$ & $4$ & Prop \ref{primpropo}\\
$L10n90$ & $3$ & Prop \ref{primpropo}\\
$L10n91$ & $4$ & Prop \ref{primpropo}\\
$L10n92$ & $5$ & Prop \ref{primpropo}\\
$L10n93$ & $5$ & Prop \ref{primpropo}\\
$L10n94$ & $5$ & Prop \ref{primpropo}\\
$L10n95$ & $5$ & Prop \ref{primpropo}\\
$L10n96$ & $5$ & Prop \ref{primpropo}\\
$L10n97$ & $5$ & Prop \ref{primpropo}\\
$L10n98$ & $5$ & Prop \ref{primpropo}\\
$L10n99$ & $5$ & Prop \ref{primpropo}\\
$L10n100$ & $3$ & Prop \ref{primpropo}\\
$L10n101$ & $5$ & Prop \ref{primpropo}\\
$L10n102$ & $5$ & Prop \ref{primpropo}\\
$L10n103$ & $3$ & Prop \ref{primpropo}\\
$L10n104$ & $5$ & Prop \ref{primpropo}\\
$L10n105$ & $5$ & Prop \ref{primpropo}\\
$L10n106$ & $4$ & Prop \ref{primpropo}\\
$L10n107$ & $2$ & Prop \ref{primpropo}\\
$L10n108$ & $5$ & Prop \ref{primpropo}\\
$L10n109$ & $5$ & Prop \ref{primpropo}\\
$L10n110$ & $5$ & Prop \ref{primpropo}\\
$L10n111$ & $5$ & Prop \ref{primpropo}\\
$L10n112$ & $5$ & Prop \ref{primpropo}\\
$L10n113$ & $5$ & Prop \ref{primpropo}\\
\\
\bottomrule
\end{tabular}
\caption{Unlinking numbers of prime, non-split links with crossing number $10$}
\end{center}
\end{table}
\clearpage
\bibliography{bibliografia}
\bibliographystyle{plain}
\end{document}